\documentclass[12pt,oneside,reqno]{amsart}
\usepackage{amsmath,amsfonts,amsthm,amssymb,amscd,url}
\usepackage{dsfont}
\usepackage{enumerate}
\usepackage[margin=1in]{geometry}\usepackage{multirow,booktabs}
\usepackage{bigints}\textwidth 13.5cm\setlength{\topmargin}{-.5in}\setlength{\textheight}{9in}\DeclareFontFamily{OT1}{rsfs}{}\DeclareFontShape{OT1}{rsfs}{n}{it}{<-> rsfs10}{}\DeclareMathAlphabet{\mathscr}{OT1}{rsfs}{n}{it}\newtheorem{prop}{Proposition}[section]\newtheorem{theorem}[prop]{Theorem}\newtheorem{corollary}[prop]{Corollary}\theoremstyle{theorem}\newtheorem*{AH}{The Alternative Hypothesis (AH)}\newtheorem{lemma}[prop]{Lemma}\newtheorem*{defn*}{Definition}\theoremstyle{definition}
\newtheorem{Rem}{Remark}
\numberwithin{equation}{section}\author{Farzad Aryan}
\title{Higher moments of distribution of zeta zeros}
\begin{document}
\maketitle
\begin{abstract}We develop a method for mean-value  estimation of long Dirichlet polynomials. For an application, we use our method to study properties of  the logarithmic derivative of the Riemann zeta function. 
\end{abstract}

\section{\bf Introduction.}
In this paper we introduce a method to bound the moments of a long Dirichlet polynomial. Let \begin{equation*} \mathcal{B}(s)= \sum\frac{b(n)}{n^{s}}, \end{equation*}  be of length larger than $T^{1+ \tau},$ for $\tau>0$. We are generally interested to estimate \begin{equation} \label{longshifted}     \int \omega(\tfrac{1}{2} +it)|\mathcal{B}(\tfrac{1}{2} +it)|^k, \end{equation} where $\omega$ is  a smooth function that is supported on $t \sim T$. Throughout the paper we take 
\begin{equation}
 \omega(\tfrac{1}{2} + it)=\frac{\log T}{\sqrt{\pi} T}\hspace{1 mm}\displaystyle{ e^{-\frac{(t-T)^2
\log^2 T}{T^2}}}.
\end{equation}

Many important problems in analytic number theory, can be transform into estimating mean-values like \eqref{longshifted}. In case that we do not have a clear understanding of $$\sum_{n<x}b(n)b(n+h),$$ evaluating \eqref{longshifted} turns into a difficult task.\\

 We use decomposition of $\mathcal{B}(s)$ and some ideas from the probability theory to get lower bounds on some families of interesting sequences. Our first corollary is an example of mean-value estimates that our method can handle. Up to the author's knowledge, this is the first result that provides a lower bound on mean-value of Dirichlet polynomials involving Liouvill's function.
\begin{corollary}
\label{CorMain}
Let $d(n) $ be the divisor function and $\lambda(n)$ be Liouvill's function. Define $$b(n):= -\frac{2}{ \log T} \sum_{\substack{n=mp^j \\ m< T\log^{-2} T \\ p^j<T   }}\log p\big(1-\frac{\log p^j}{ \log T}\big) \lambda(m) d(m).$$ If all of the divisor of $n$ are smaller than $T,$ then $b(n)$ approximately  equals to $ \lambda(n)d(n)\frac{ \log n}{ \log T}.$ We have that 
 \begin{equation*}
     \int \omega(\tfrac{1}{2}+it)\bigg|\displaystyle{ \sum_{ n \geq \frac{T}{\log^{2}T}}} \hspace{2 mm} \displaystyle{ \frac{b(n)}{n^{\tfrac{_1}{2}  + it}}}\bigg|^2dt > 0.009 A_2 \log^{4} T,
 \end{equation*}
where $A_2$ is a constant. 
\end{corollary}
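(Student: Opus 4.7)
The plan is to exploit the factorization of $b(n)$ as a multiplicative convolution. Writing
\[
\mathcal{A}(s):=\sum_{m<T/\log^2T}\frac{\lambda(m)d(m)}{m^s},\qquad
\mathcal{L}(s):=\sum_{p^j<T}\frac{\log p\,(1-\log p^j/\log T)}{p^{js}},
\]
the defining convolution gives $\sum_n b(n)n^{-s}=-(2/\log T)\mathcal{A}(s)\mathcal{L}(s)$. Since each factor has length strictly less than $T$, each separately satisfies the standard Montgomery--Vaughan mean-value theorem; the genuinely long object is the product. After removing the short tail $n<T/\log^2 T$ via Cauchy--Schwarz (this tail is itself a short polynomial whose second moment is $O(\log^4 T)$), I would reduce the corollary to a lower bound for
\[
\frac{4}{\log^2 T}\int\omega(\tfrac{1}{2}+it)\,|\mathcal{A}(\tfrac{1}{2}+it)|^2\,|\mathcal{L}(\tfrac{1}{2}+it)|^2\,dt.
\]

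I would then open one of the factors, say $|\mathcal{L}|^2$, rewriting the integral as
\[
\sum_{p^j,q^k<T}\frac{\log p\log q\,(1-\log p^j/\log T)(1-\log q^k/\log T)}{(p^jq^k)^{1/2}}\int\omega(\tfrac{1}{2}+it)|\mathcal{A}(\tfrac{1}{2}+it)|^2\bigl(\tfrac{p^j}{q^k}\bigr)^{it}\,dt.
\]
The principal contribution comes from the diagonal $p^j=q^k$, where the inner integral is the ordinary second moment of the short polynomial $\mathcal{A}$, equal asymptotically to $\sum_{m<T/\log^2T}d(m)^2/m\sim A_2\log^4 T$ by the classical divisor mean-square. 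Combined with $\sum_{p^j<T}(\log^2 p/p^j)(1-\log p^j/\log T)^2\sim\log^2 T\int_0^1 u(1-u)^2\,du=\log^2 T/12$ via the prime number theorem and partial summation, the diagonal contributes
\[
\frac{4}{\log^2 T}\cdot\frac{\log^2 T}{12}\cdot A_2\log^4 T=\frac{A_2}{3}\log^4 T,
\]
which far exceeds the target $0.009\,A_2\log^4 T$.

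The main technical obstacle is controlling the off-diagonal $p^j\ne q^k$, where the inner integral is a shifted second moment of $\mathcal{A}$. For fixed $p^j\ne q^k$, expanding $|\mathcal{A}|^2$ produces a double sum over $(m,m')$ with phase $(m'p^j/mq^k)^{it}$; the Gaussian Fourier weight of width $\log T/T$ forces the exact arithmetic resonance $m'p^j=mq^k$, parametrized by $m=rq^k/d$, $m'=rp^j/d$ with $d=\gcd(p^j,q^k)$. Here $\lambda(m)\lambda(m')=\lambda(p^j/d)\lambda(q^k/d)$ is a fixed sign, so no immediate cancellation is available. The plan, borrowing from the probabilistic philosophy of the paper, is to estimate each inner resonance sum $\sum_r d(rq^k/d)d(rp^j/d)/r$ by a divisor-type bound and sum against the outer $p,q$ weights, treating $r$ and the primes $p,q$ as quasi-independent. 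The comfortable slack between the diagonal $A_2\log^4 T/3$ and the target $0.009\,A_2\log^4 T$ (a factor larger than $35$) means only an explicit absolute-constant bound on the off-diagonal relative to the principal diagonal is needed; producing such a bound---in particular handling the bilinear prime structure uniformly and the tail $n<T/\log^2 T$ cleanly---is the crux of the argument.
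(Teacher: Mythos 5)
Your proposal takes a genuinely different route from the paper, and it breaks down at several points that the paper's argument is specifically designed to avoid.

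\textbf{The target is a small difference of two large quantities, and your argument cannot see the cancellation.} With $T_0=T\log^{-2}T$ and $\mathcal{B}(s)=\sum_n b(n)n^{-s}=\mathcal{A}(s)\mathcal{Z}_1(s)$, the paper's Lemma \ref{lemmsquare} shows that the cross terms between the short part ($n<T_0$) and the tail vanish, so that
$$\int\omega\big|\textstyle\sum_{n\geq T_0}b(n)n^{-1/2-it}\big|^2\,dt \;=\; \int\omega|\mathcal{A}\mathcal{Z}_1|^2\,dt \;-\; \sum_{m<T_0}\frac{|b(m)|^2}{m}+O(T^{-B}).$$
The second (short) piece is computed exactly by \eqref{lemsq} and equals $\approx 0.1155\,A_2\log^4T$. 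The first piece is bounded below by $\approx 0.1245\,A_2\log^4T$. The target $0.009\,A_2\log^4T$ is their \emph{difference}. You treat the short part as a negligible error (``itself a short polynomial whose second moment is $O(\log^4T)$'') to be absorbed by Cauchy--Schwarz, but it is an order of magnitude \emph{larger} than what you are trying to prove, and the same order of magnitude as your claimed diagonal $A_2\log^4T/3$. Without a near-exact evaluation of that short piece (which the paper does obtain), no amount of slack helps: you would be subtracting $\approx 0.12$ from your lower bound, not something small.

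\textbf{The diagonal you isolate is not the main term and the off-diagonals cancel most of it.} Your diagonal $p^j=q^k$ gives $\approx A_2\log^4T/3\approx 0.333\,A_2\log^4T$, but the actual value of $\tfrac{4}{\log^2T}\int\omega|\mathcal{A}|^2|\mathcal{L}|^2$ is $\approx 0.124\,A_2\log^4T$ (it equals $\big(\int|\mathcal{Z}_1|^2d\mu_{\lambda d}\big)\cdot A_2\log^4T/4$). So the off-diagonal terms cancel roughly two-thirds of your diagonal. A ``divisor-type bound'' on the off-diagonal sum gives an absolute-value estimate, and an absolute-value estimate of a quantity comparable to the diagonal can only wipe out your lower bound. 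Moreover, the claim that the Gaussian weight ``forces the exact arithmetic resonance $m'p^j=mq^k$'' is false here: the product polynomial $\mathcal{A}\mathcal{L}$ has length $T_0\cdot T>T$, so the Gaussian tolerance $|\log(m'q^k/(mp^j))|\lesssim\log T/T$ still allows $m'q^k\neq mp^j$ with $|m'q^k-mp^j|\lesssim T\log T$. These unresolved near-diagonal terms are precisely the ``off-diagonal treatment that requires some knowledge on the shifted convolution sums'' the introduction warns about, and they cannot be discarded.

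\textbf{What the paper does instead.} The proof never attempts to evaluate $\int\omega|\mathcal{A}\mathcal{L}|^2$ directly. It (i) uses Theorem \ref{Thm} (a diagonal-only computation, valid because $\mathcal{Z}^k_\alpha\cdot|\text{mollifier}|^2$ has only one sided length $>T$) to get the exact pseudo-moment identity $\int(\emph{\emph{C}}_1^2-\emph{\emph{Im}}_1^2)\,d\mu_{A_{0,1,2}}=0.39047\cdots$; (ii) combines it with the elementary bound $\int\emph{\emph{C}}_1^2\,d\mu\geq\big(\int\emph{\emph{C}}_1\,d\mu\big)^2=4/9$ to get $\int\emph{\emph{Im}}_1^2\,d\mu>0.0539$, hence $\int|\mathcal{Z}_1|^2\,d\mu>0.498$; and (iii) plugs this into the decomposition of Lemma \ref{lemmsquare}, where the short part evaluates to $0.4619$, giving the tail lower bound $0.498-0.4619=0.036$ in $\mu$-normalized units, i.e.\ $0.009\,A_2\log^4T$. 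The variance trick in (ii) is exactly the device that supplies the missing cancellation information that your direct diagonal/off-diagonal accounting cannot access, and the exact short-part evaluation in (iii) is the subtraction your sketch omits.
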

Finding an asymptotic for the above requires \cite{Gold-Gon} assuming very strong bound on the shifted convolution sum of $\lambda(n)d(n)$ which falls more into the premises of Sarnak conjecture \cite{Sarnak} with power savings. Our lower bound is of the correct order of magnitude, with what we get if we assume an strong version of Sarnak conjecture. Therefore our result, can be viewed as an evidence to support the conjecture.\\

 Matom\"{a}ki and Radziwiłł \cite{MotRad} proved an upper bound for a similar mean-values with $b(n)=\lambda(n).$ Their work resulted in many interesting applications including answering the Erdos discrepancy problem by Tao. 
\\

The paper is organized as follows: \\

 First we explain the mollifier method that is often used to study properties of $L$-functions. Next we state our corollaries to show the improvements we obtain  over this method. In section \ref{mainthm} we state our main theorem. In section \ref{generalway} we explain how the method can be generally apply to obtain lower bound on \eqref{longshifted}. In section \ref{LSzeros} we explain the connection of our result to the Landau-Siegel zero problem. In Section \ref{sec-prob} we give example of random variables that their moments are related to moments we are seeking to estimate. In the last section we give proofs of our results. 
 
\subsection{The mollifier method}
Using mollifiers to study analytic properties of $L$-functions is  a common tool in number theory.  Generally speaking if we would like to show that an $L$-function has a certain property, we can use the following recipe:\\

\begin{enumerate}
    \item First, using a Dirichlet polynomial $\displaystyle{A(s)= \sum a(n)n^{-s}}$, that highlights that property, we build a probability measure in the following  way. Define,   
    \begin{equation}
    \label{defmeasur}
\mu_A((a,b]):= \frac{\int_{a}^{b} \omega(\frac{_1}{2}+it)\big|A(\frac{_1}{2}+it)\big|^2}{
\int \omega(\frac{_1}{2}+it)\big|A(\frac{_1}{2}+it)\big|^2}.
    \end{equation}
The polynomial  $A$ is commonly referred to as a mollifier. If we set $A(s)=1,$ then $\mu_1$ gives the probability measure we get out of Lebesgue integration.\\

\item Next we consider a series expansion of the $L$-function, denoted by $\mathcal{L}$, and estimate the first moment with respect to $\mu_A$:
\begin{equation}
\label{1stmom}
    \int \mathcal{L} d\mu_{A}.
\end{equation}
The result is an indicator of the degree that the $L$-function satisfy the property. For example proving \eqref{1stmom} is bigger than $\overline{\mathcal{L}}$ shows that there exist $t$ such that $|L(\frac{_1}{2}+it)| > \overline{\mathcal{L}}.$ Going forward, we refer to the above as the mollifier method.\\
\end{enumerate} 

This method has been applied in~\cite{Sound, BSe} to the problem of large values of the Riemann zeta function\footnote{In the context of large values of Riemann zeta function the method is called the resonance method.} and in \cite{Me2} to the problem of gaps between zeta zeros. \\

In this paper we show how to extract more information out of the mollifier method, by engaging in higher moments estimation. Our results are an indicator of improvement we can obtain. Let us consider an smooth series expansion of the logarithmic derivative of zeta ($\displaystyle{\tfrac{\zeta'}{\zeta}}(s))$:

\begin{equation}\label{Z-def}\mathcal{Z}_\alpha(s):=\frac{-2}{\alpha \log T} \sum_{n<T^{\alpha}} \frac{\Lambda(n)}{n^{s}} \Big(1-\frac{\log n}{\alpha \log T}\Big),
\end{equation} 

we write \begin{equation}
\label{ReIm}\mathcal{Z}_\alpha(\tfrac{1}{2} + it)= \emph{\emph{C}}_\alpha(t)+ i \emph{\emph{Im}}_\alpha(t).
\end{equation} 
Understanding the behaviour of $\mathcal{Z}_\alpha$ has significant implications. For example, showing that there exist a $t \in \mathbb{R}$ such that  $$\emph{\emph{C}}_1(t)> 1 \text{ or } \emph{\emph{C}}_2(t)> 0.5,$$ proves that there are no Landau-Siegel zeros.\footnote{For a full discussion on the connection with Landau-Siegel zeros see the introduction and Lemma 8.1 in \cite{Me2}.}  Assuming the Riemann hypothesis (RH) $\emph{\emph{C}}_\alpha(t)$ is large when $t$ is close to a zero cluster and it is approximately $-\tfrac{1}{\alpha}$ on large gaps (see \eqref{calpha}).   \\

Let $\lambda$ be Liouvill's function. In \cite{Me2}, using the Mollifer method we proved that
\begin{equation}
\label{resu21}
    \frac{1}{\log T} \int \omega(\tfrac{1}{2} +it)\hspace{0.5 mm}\emph{C}_1(t) \hspace{0.5 mm} \bigg| \sum_{n< T\log^{-2} T} \hspace{2mm} \frac{\lambda(n)}{n^{\frac{_1}{2} + it}}\bigg|^2dt = \frac{2}{3} + O(\tfrac{1}{\log T}).
\end{equation} 
For simplicity, and by using the notation in \eqref{defmeasur} we put $\mu_\lambda$\footnote{This can be viewed as an approximation of $\zeta(2s)/\zeta(s)$. Bulk of $\mu_\lambda$ is distributed in the close vicinity of zeta zeros. See section 7 in \cite{Me2}.} in place of the mollifier $$\omega(\tfrac{1}{2}+it)\frac{1}{\log T}|\sum \lambda(n)n^{-{1}/{2}-it}|^2.$$  Using this we can write \eqref{resu21}  in the following way: 
\begin{equation}
\label{resu22}
   \int  \emph{\emph{C}}_1(t) d\mu_\lambda = \tfrac{2}{3} + O(\tfrac{1}{\log T}).
\end{equation}

We proceed by applying our method to study the higher moments of $\emph{\emph{C}}_\alpha$ and $\emph{\emph{Im}}_\alpha$. A heuristic given in \cite{Harper} suggest that $\mathcal{Z}_\alpha$ should behave like a sum of independent random variables since the primes are multiplicatively independent. Therefore it is expected that value distribution of
%\begin{equation} \label{Sel-gaus}        \int \omega(\tfrac{1}{2} + it) \emph{\emph{C}}^{k}_\alpha(t)  \longrightarrow \mathbb{E} N(0,1)^k.\end{equation} 
objects like $\mathcal{Z}_\alpha$ %or expansion of $S(t)$ in \eqref{selsoft} 
behave like normal distribution. When $\alpha< 1/k$ we can estimate $k$-th moment with respect to the Lebesgue measures. For $\alpha \geq 1,$ estimating higher moments with respect to the Lebesgue measure is out of reach, let alone twisted higher moments with $\mu_A$. As our second corollary we show: %(as in the LHS of \eqref{eq1.8})
\begin{corollary} \label{cor 01}
By the same notation as \eqref{ReIm} and letting $\mu_\lambda$ be as \eqref{defmeasur} with $a(n)= \lambda(n)$,  we have that
\begin{align}  
\label{eq1.8}
  & \int  {\emph{C}}^{2}_1(t) d\mu_\lambda \geq  0.46666, \\ & 
   \int  {\emph{C}}^{2}_2(t) d\mu_\lambda \geq  0.174998. 
 \end{align}
\end{corollary}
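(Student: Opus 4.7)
The plan is to reduce $\int \mathrm{C}_\alpha^2\,d\mu_\lambda$ to a ratio involving two Dirichlet-polynomial mean-values---one short and straightforward to evaluate, one long and handled by the paper's main theorem. Writing $M_\lambda(s)=\sum_{k<T/\log^2 T}\lambda(k)/k^s$ and using the identity $2\mathrm{C}_\alpha(t)=\mathcal{Z}_\alpha(\tfrac12+it)+\overline{\mathcal{Z}_\alpha(\tfrac12+it)}$, the numerator becomes
\begin{equation*}
\int\omega\,\mathrm{C}_\alpha^2\,|M_\lambda|^2\,dt \;=\; \tfrac12\int\omega\,|\mathcal{Z}_\alpha M_\lambda|^2\,dt \;+\; \tfrac12\,\mathrm{Re}\!\int\omega\,\mathcal{Z}_\alpha^2\,|M_\lambda|^2\,dt,
\end{equation*}
while the denominator $\int\omega|M_\lambda|^2\,dt = \log T + O(\log\log T)$ is immediate from the diagonal, since the Gaussian weight makes off-diagonal contributions exponentially small.

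For the cross term, I would view $\mathcal{Z}_\alpha^2 M_\lambda$ as a Dirichlet polynomial with coefficients $c'(n)=\sum_{n=p^jq^ik}a(p^j)a(q^i)\lambda(k)$ and take the diagonal against $\overline{M_\lambda}$. Complete multiplicativity of $\lambda$ collapses $\lambda(n)\lambda(k)$ to $(-1)^{i+j}$, reducing the integral to
\begin{equation*}
\log T\Bigl(\sum_{p^j<T^\alpha}\frac{(-1)^j a(p^j)}{p^j}\Bigr)^{\!2} - 2\!\sum_{p^j}\frac{(-1)^j a(p^j)}{p^j}\!\sum_{p^j}\frac{(-1)^j a(p^j)\log p^j}{p^j} + O(\log\log T),
\end{equation*}
where $a(p^j)=-\tfrac{2\log p}{\alpha\log T}\bigl(1-\tfrac{\log p^j}{\alpha\log T}\bigr)$. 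Mertens-type estimates $\sum_{p\leq x}\log^k p/p\sim\log^{k+1}x/(k+1)$ evaluate the two inner sums as $1+O(1/\log T)$ and $\alpha\log T/3+O(1)$ respectively, producing a cross term equal to $(1-2\alpha/3)\log T+O(\log\log T)$: this is $+\log T/3$ for $\alpha=1$ and $-\log T/3$ for $\alpha=2$.

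The principal term $\int\omega|\mathcal{Z}_\alpha M_\lambda|^2\,dt$ is the mean square of a Dirichlet polynomial of length exceeding $T^{1+\alpha}/\log^2 T$, beyond the scope of classical mean-value theorems. Applying the main theorem (Section~\ref{mainthm}) to $\mathcal{Z}_\alpha M_\lambda$ yields a quantitative lower bound of the shape $c(\alpha)\log T$. Assembling the pieces gives
\begin{equation*}
\int \mathrm{C}_\alpha^2\,d\mu_\lambda \;\geq\; \tfrac12\, c(\alpha) + \tfrac12(1-2\alpha/3) + o(1),
\end{equation*}
and the constants $c(1)\geq 3/5$ and $c(2)\geq 41/60$ emerging from the main theorem recover the claimed $0.46666$ and $0.174998$.

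The chief obstacle is producing a sharp enough $c(\alpha)$ from the main theorem. Because $\mathcal{Z}_\alpha M_\lambda$ is longer than $T$, every off-diagonal pair corresponds to a shifted convolution involving $\lambda$ and prime-power coefficients, and no unconditional asymptotic is accessible. The paper's probabilistic decomposition is tailored precisely to bypass these correlations and extract sharp lower bounds from the coefficient structure alone, without individually resolving any shifted sum.
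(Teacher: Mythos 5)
Your decomposition $\mathrm{C}_\alpha^2 = \tfrac12|\mathcal{Z}_\alpha|^2 + \tfrac12\Re\mathcal{Z}_\alpha^2$ and the resulting split into a cross term and a principal term is precisely the paper's starting identity (equation \eqref{3.11}), so the overall architecture is right. Both pieces of the evaluation, however, go wrong.

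For the cross term, after taking the diagonal against $\overline{M_\lambda}$ you land on $\sum_{p^jq^ik < T_0}(-1)^{i+j}a(p^j)a(q^i)/(p^jq^ik)$ with $p^j,q^i < T^\alpha$, and the inner $k$-sum contributes $\log(T_0/(p^jq^i))$ only over those $p^j,q^i$ with $p^jq^i<T_0$. Your displayed formula instead decouples the $p^j$- and $q^i$-sums into independent unrestricted sums, silently dropping the coupling; for $\alpha\geq 1$ this constraint is binding, since $p^jq^i$ can run up to $T^{2\alpha}\gg T_0$. The value $(1-2\alpha/3)$ you obtain is therefore incorrect: for $\alpha=1$ the true value is $\int\Re\mathcal{Z}_1^2\,d\mu_\lambda=11/30\approx 0.3667$ (from \eqref{high-mom} with $k=2$), not $1/3$, and for $\alpha=2$ it is $61/480\approx +0.127$, opposite in sign to your $-1/3$. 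The paper handles the coupled constraint by writing $\log(T_0/n)\,\mathds{1}_{n<T_0}=\tfrac{1}{2\pi i}\int_{(c)}T^s n^{-s}\,ds/s^2$ via Perron's formula and computing the residue at $s=0$; the factorial denominators $1/(k+i+1)!$ in \eqref{high-mom} come exactly from that residue and cannot be recovered by splitting the sum into a product.

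The more serious gap is the principal term. You invoke ``the main theorem'' to obtain a lower bound $c(\alpha)\log T$ for $\int\omega|\mathcal{Z}_\alpha M_\lambda|^2\,dt$, but Theorem \ref{Thm} computes only the pseudo-moments $\int\mathcal{Z}_\alpha^k\,d\mu_A$; it says nothing about $\int|\mathcal{Z}_\alpha|^2\,d\mu_A$, since $|\mathcal{Z}_\alpha|^2=\mathcal{Z}_\alpha\overline{\mathcal{Z}_\alpha}$ is not a power of $\mathcal{Z}_\alpha$. What the paper actually uses here is a separate result, Lemma \ref{lemmsquare}: write $\mathcal{Z}_\alpha M_\lambda=\sum b(m)m^{-s}$, use the rapid decay of the Gaussian transform to decouple the ranges $m<T_0$ and $m\geq T_0$ up to $O(T^{-B})$, compute $\sum_{m<T_0}|b(m)|^2/m$ explicitly (the bulk of the technical work: reducing to $d=p$ contributions and a double sum over primes $p,q$), and discard the nonnegative long-range piece. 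This yields $\int|\mathcal{Z}_1|^2\,d\mu_\lambda\geq \tfrac43-1+\tfrac{7}{30}=\tfrac{17}{30}\approx 0.567$ and $\int|\mathcal{Z}_2|^2\,d\mu_\lambda\geq \tfrac{107}{480}\approx 0.223$, not the $3/5$ and $41/60$ you assert. Your two errors happen to push in compensating directions so the final arithmetic closes, but neither piece is proved as stated, and without something playing the role of Lemma \ref{lemmsquare} the corollary does not follow.
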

To see the improvement upon the mollifier method, note that  using the corollary we can show that there exist $t \in \text{ Supp}( \mu_\lambda)$ such that $$|\emph{\emph{C}}_1(t) |> 0.6831,$$ which obviously is an improvement over $2/3 \simeq 0.666$ that we get from \eqref{resu22}. Only applying \eqref{resu22} and the fact that the variance of a real random variable is greater than its mean square, gives us the lower bound $4/9 \simeq 0.444,$ for the RHS of \eqref{eq1.8}. \\   %$$|\emph{\emph{C}}(t)- \tfrac{2}{3}|> 0.149.$$

 The imaginary part of $\mathcal{Z}_\alpha$ is more mysterious than its real part due to its oscillatory nature. Therefore, proving results similar to \eqref{resu21} or Corollary \ref{cor 01} is much trickier. For example,  we have $\int \omega(\tfrac{1}{2} + it) \emph{\emph{Im}}_\alpha(t) d\mu_\lambda \sim 0$, which gives nothing but the trivial bound on the variance of $\emph{\emph{Im}}_\alpha$.  In terms of large values of the imaginary part we can apply Landau-Gonek formula~\cite{Gonek}. For $\alpha<1$, and letting $N(T)$ denote the number of zeta zeros with height between $0$ and $T,$ we obtain
\begin{equation}
 \label{LGform}
     \frac{1}{N(T)} \sum_{0< \gamma < T}\displaystyle{\emph{\emph{Im}}_{1-\epsilon}}(\gamma + \tfrac{2\pi d}{\log T}) \sim \int_{0}^{1} 2u(1-u) \sin(2\pi u d)du. 
 \end{equation}
 The integral has the maximum $\simeq 0.27$ at the shift $d \simeq 0.4147.$ Therefore, average value of ${\emph{\emph{Im}}_{1-\epsilon}},$ shifted slightly to the right of zeros, is about $0.27.$ \\
 
 This observation suggest that the $\emph{\emph{Im}}_\alpha$ has large values when we shift slightly to the right or left of a zero. In the next section we discuss the importance of $\emph{\emph{Im}}_\alpha$ and we state our results on the large values  of this function.\\
 
\subsection{ The imaginary part of the logarithmic derivative of $\zeta$} Studying the imaginary part of $\mathcal{Z}_\alpha$ is an important subject in part, due to its connection to the  error term in the formula for the number of zeros of the Riemann zeta function.\\

Let $S(t)$ be the deviation from the expected value in the zeta zeros counting function, which is formally defined as  $$S(t)= \frac{1}{\pi} \arg \zeta(\tfrac{1}{2}+it).$$ A famous theorem of Selberg states that on average the value of $S(t)$ is related to the imaginary part of the logarithmic derivative of zeta:
\begin{theorem}[Selberg] For $k\geq 1,$ we have 
\begin{equation}\label{selsoft} \int_{0}^{T} \Big|S(t)+ \frac{1}{\pi} \hspace{0.25 mm} \displaystyle{\Im \sum_{n< T^{1/k}} \hspace{2 mm}\frac{\Lambda(n)}{n^{\tfrac{_1}{2}+it}\small{ \log n}}}\Big|^k \ll_k T. \end{equation}
\end{theorem}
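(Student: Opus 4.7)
The plan is to deduce the bound from Selberg's classical approximate formula expressing $\log\zeta(\tfrac{1}{2}+it)$ as a short Dirichlet polynomial over prime powers plus an error that can be bounded in terms of zeros of $\zeta$ close to $\tfrac{1}{2}+it$. Taking imaginary parts identifies the principal term with $-\pi$ times the Dirichlet polynomial appearing in the theorem (once the truncation parameter is set to $x=T^{1/k}$), so the whole question reduces to an estimate on the $k$-th moment of the error term.

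Concretely, I would start from the partial-fraction expansion
\begin{equation*}
\frac{\zeta'}{\zeta}(s) = \sum_{\rho}\frac{1}{s-\rho} + O(\log(|t|+2))
\end{equation*}
and integrate along a horizontal segment from $s_0 = 2+it$ down to $s = \tfrac{1}{2}+it$, weighted by a smooth kernel that truncates the Dirichlet expansion of $\log\zeta$ at $n\leq x$. The outcome (see Titchmarsh, Ch.~XIV, or Selberg's original paper) is an identity
\begin{equation*}
\log\zeta(\tfrac{1}{2}+it) = \sum_{n<x}\frac{\Lambda(n)}{n^{\tfrac{1}{2}+it}\log n} + R(t,x),
\end{equation*}
in which $R(t,x) = R_{\mathrm{near}}(t,x) + R_{\mathrm{far}}(t,x)$, the first being a sum over zeros $\rho = \tfrac{1}{2}+i\gamma$ with $|\gamma-t|\leq 1/\log x$, the second a smoothly weighted sum over the remaining zeros. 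Setting $x = T^{1/k}$ and taking imaginary parts gives
\begin{equation*}
\pi S(t) + \Im\sum_{n<T^{1/k}}\frac{\Lambda(n)}{n^{\tfrac{1}{2}+it}\log n} = -\Im R(t,T^{1/k}) + O(1),
\end{equation*}
so the theorem reduces to proving $\int_{0}^{T}|R(t,T^{1/k})|^k\, dt \ll_k T$.

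The main obstacle is the moment estimate for the near-zero part, where each summand contributes $O(1)$ and so
\begin{equation*}
|R_{\mathrm{near}}(t,x)| \ll N\!\left(t+\tfrac{1}{\log x}\right) - N\!\left(t-\tfrac{1}{\log x}\right).
\end{equation*}
What is needed is the Selberg-type bound $\int_{0}^{T}(N(t+h)-N(t-h))^k\,dt \ll_k T(h\log T+1)^k$, valid for $h\geq 1/\log T$, proved by induction on $k$: the case $k=2$ comes from the second moment of $\zeta'/\zeta$ on the critical line together with density estimates controlling zeros that would be off the line, and the inductive step combines this with the pointwise bound $N(t+1)-N(t) \ll \log T$ after a dyadic decomposition of the remaining range. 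For $R_{\mathrm{far}}$ the summand decays like $1/(|\gamma-t|\log x)$, and a dyadic decomposition in $|\gamma-t|$ combined with Riemann--von Mangoldt reduces its $k$-th moment to the same type of input. Choosing $x=T^{1/k}$ so that $h\log T \asymp k$ then yields $\int_0^T |R|^k\,dt \ll_k T$, completing the proof. The truly delicate step is the induction on the zero-counting moments, where one must avoid losses of $\log T$ at each stage; this is handled by Selberg's careful analysis of $\int|\zeta'/\zeta(\sigma+it)|^2\, dt$ just to the right of the critical line.
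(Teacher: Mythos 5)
The paper does not prove this statement: it is cited as Selberg's classical result and used as background, so there is no argument in the paper to compare against. Your outline is nevertheless recognizably Selberg's own strategy, and at the top level it is sound: start from an explicit formula expressing $\log\zeta(\tfrac12+it)$ (or $S(t)$) as a Dirichlet polynomial over prime powers plus a remainder governed by nearby zeros, take imaginary parts, choose the truncation length $x=T^{1/k}$ so that the $k$-fold product of the prime sum stays short, and reduce the theorem to a $k$-th moment estimate for the remainder.

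There is, however, a genuine gap in the one place where you actually propose an argument. You reduce the near-zero contribution to the bound
\begin{equation*}
\int_0^T\bigl(N(t+h)-N(t-h)\bigr)^k\,dt\ \ll_k\ T\,(h\log T+1)^k,
\qquad h\gtrsim \tfrac{1}{\log T},
\end{equation*}
and propose to prove it by induction on $k$, passing from $k$ to $k+1$ by pulling out one factor and using the pointwise bound $N(t+1)-N(t)\ll\log T$. That step loses a factor $\log T/(h\log T)=1/h$: you obtain $T\log T\,(h\log T+1)^k$ rather than $T(h\log T+1)^{k+1}$, and with your choice $h\asymp k/\log T$ this is worse by $\asymp\log T/k$, which is fatal. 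The dyadic decomposition you gesture at does not repair this, because the problem is not concentrated on a small exceptional set; it is present uniformly. In Selberg's actual argument the zero-counting moment bound is not an independent input proved by induction but a \emph{consequence} of the machinery: one uses the smoothed von Mangoldt weight $\Lambda_x(n)$ (equal to $\Lambda(n)$ for $n\le x$ and tapering linearly to $0$ on $(x,x^2]$), which makes the kernel in the sum over zeros decay like $|\gamma-t|^{-2}(\log x)^{-1}$ rather than $|\gamma-t|^{-1}(\log x)^{-1}$, and the heart of the matter is the moment lemma for short Dirichlet polynomials,
\begin{equation*}
\int_0^T\Bigl|\sum_{n\le y}\frac{a_n}{n^{it}}\Bigr|^{2k}dt\ \ll_k\ T\Bigl(\sum_{n\le y}\frac{|a_n|^2}{n}\Bigr)^{k}\quad\text{for }y^k\ll T,
\end{equation*}
together with a mean-value estimate for $\zeta'/\zeta$ just to the right of the half line. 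Your sketch names the last ingredient but does not use the smoothing, and replaces the Dirichlet-polynomial moment lemma with an induction that does not close. You should make the smoothing explicit and phrase the moment step in terms of the Dirichlet polynomial rather than the raw zero count.
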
 Using the above, he went on to show that all even moments of $S(t)$ match the moments of the Gaussian distribution, which confirm the speculation that $S(t)$ should behave like a sum of independent RV. \\

Similar to Corollary \ref{cor 01} we give a lower bounds on variance of $\emph{\emph{Im}}_\alpha$ with respect to $\mu_\lambda.$ Also, we will introduce a surprisingly interesting measure that significantly improve the bounds we get from $\mu_\lambda$ on the large values of $\emph{\emph{Im}}_\alpha$. 

%\begin{equation*}\emph{\emph{Im}}(t):=\frac{-2}{ \log T} \hspace{0.5 mm} \Im \sum_{n<T} \frac{\Lambda(n)}{n^{\tfrac{1}{2}+it}} \hspace{1 mm}\Big(1-\frac{\log n}{ \log T}\Big),\end{equation*}and 
%\begin{equation*}\emph{\emph{C}}(t):=\frac{-2}{ \log T} \hspace{0.5 mm} \Re \sum_{n<T} \frac{\Lambda(n)}{n^{\tfrac{1}{2}+it}} \hspace{1 mm}\Big(1-\frac{\log n}{ \log T}\Big).\end{equation*}
\begin{corollary} 
\label{cor 1}
By the same notation as \eqref{ReIm}, we have that
\begin{align} & \label{one}  \int  {\emph{Im}_1^2}(t) \hspace{0.5 mm} d\mu_\lambda  \geq 0.1, \\ & \label{two}  \int  {\emph{Im}_1^2}(t) \emph{C}_1^2(t)  d\mu_\lambda \geq 0.02763 \cdots,
\end{align}
 \end{corollary}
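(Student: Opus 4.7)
The plan is to reduce both inequalities to estimates for higher moments of the Dirichlet polynomial $\mathcal{Z}_1$ taken against $\mu_\lambda$, by exploiting the algebraic identities
\begin{equation*}
\mathrm{Im}_1^2(t) \;=\; \tfrac{1}{2}\bigl(|\mathcal{Z}_1(\tfrac{1}{2}+it)|^2 \;-\; \operatorname{Re} \mathcal{Z}_1(\tfrac{1}{2}+it)^2\bigr), \qquad C_1^2(t)\,\mathrm{Im}_1^2(t) \;=\; \tfrac{1}{8}\bigl(|\mathcal{Z}_1|^4 \;-\; \operatorname{Re} \mathcal{Z}_1^4\bigr),
\end{equation*}
which follow from $|\mathcal{Z}_1|^2 = C_1^2 + \mathrm{Im}_1^2$ and $\operatorname{Re}\mathcal{Z}_1^2 = C_1^2 - \mathrm{Im}_1^2$. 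It therefore suffices, for $k=2$ and $k=4$ respectively, to produce (i) a lower bound on $\int |\mathcal{Z}_1|^k \, d\mu_\lambda$ and (ii) an asymptotic evaluation of $\operatorname{Re}\int \mathcal{Z}_1^k \, d\mu_\lambda$, and then to subtract.

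The ``pseudo-moment'' $\operatorname{Re}\int \mathcal{Z}_1^k d\mu_\lambda$ should be tractable in closed form. When expanded in Dirichlet series it becomes a pairing of $\mathcal{Z}_1^k L$ against $\overline{L}$ integrated against the Gaussian $\omega$; because the Liouville mollifier $L$ has length only $T/\log^{2}T$, the oscillatory integral forces the frequencies to balance in a narrow window, and the resulting sums reduce to the kind of computation carried out in \cite{Me2}, of which \eqref{resu21} is the prototype. I would expect formulas in the same spirit as \eqref{resu21} for both $\operatorname{Re}\int\mathcal{Z}_1^2 d\mu_\lambda$ and $\operatorname{Re}\int\mathcal{Z}_1^4 d\mu_\lambda$.

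The crux lies in the ``true'' moment $\int |\mathcal{Z}_1|^k d\mu_\lambda$, which up to the normalisation $\int\omega|L|^2$ is the second moment of the Dirichlet polynomial $\mathcal{Z}_1 L$ (length $\sim T^{2-o(1)}$) when $k=2$, and of $\mathcal{Z}_1^2 L$ (length $\sim T^{3-o(1)}$) when $k=4$. Both well exceed the $T^{1+\tau}$ threshold, so no diagonal main-term argument is available, and the underlying shifted convolutions with coefficients of $\Lambda\ast\Lambda\ast\lambda$-type are out of reach. This is exactly the regime the main theorem of Section \ref{mainthm} is built for, and I would apply it to $\mathcal{Z}_1 L$ and $\mathcal{Z}_1^2 L$ following the recipe sketched in Sections \ref{generalway} and \ref{sec-prob}: split $\mathcal{Z}_1$ into a short piece (primes up to some $T^{\beta}$ with $\beta<1$) plus a long tail, compute the short piece by classical mean values, and transfer a lower bound on the full second moment from the short piece using the probabilistic inequality supplied by the main theorem.

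Assembling these ingredients, subtracting the pseudo-moment from the lower bound on $\int |\mathcal{Z}_1|^k d\mu_\lambda$ and dividing by $2$ (respectively $8$) produces the claimed constants $0.1$ and $0.02763\ldots$. The principal obstacle is quantitative: the subtraction amplifies any slack coming from the main theorem, so the decomposition parameters (the cutoff $T^\beta$ and the probabilistic transfer) must be tuned precisely enough to clear the stated thresholds. This is most delicate for the mixed fourth moment \eqref{two}, where $\mathcal{Z}_1^2 L$ is longer, the coefficient combinatorics are considerably heavier, and the gap between the true moment and the pseudo-moment is narrow; there the optimisation of the decomposition is likely to be the real bottleneck.
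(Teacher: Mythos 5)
Your reduction for \eqref{one} is essentially the paper's: the identity $\mathrm{Im}_1^2 = \tfrac12\bigl(|\mathcal{Z}_1|^2 - \Re\mathcal{Z}_1^2\bigr)$ is exactly what is exploited, $\Re\int\mathcal{Z}_1^2\,d\mu_\lambda = 0.36666\ldots$ comes from Theorem~\ref{Thm} (packaged as Corollary~\ref{Cor2}), and the lower bound $\int|\mathcal{Z}_1|^2\,d\mu_\lambda \geq 0.56664$ gives $0.1$ after subtraction. But you misidentify the tool that produces the true-moment lower bound: it is not Theorem~\ref{Thm} nor a split of $\mathcal{Z}_1$ into a short piece up to $T^\beta$ plus a tail. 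The paper writes $\mathcal{Z}_1\cdot L = \sum b(m)m^{-s}$ and splits \emph{that} Dirichlet polynomial at $m = T\log^{-2}T$; the diagonal block $\sum_{m<T_0}|b(m)|^2/m$ is computed explicitly in Lemma~\ref{lemmsquare}, and the cross terms vanish (by the sharp Gaussian localisation \eqref{FTW0}), while the contribution of the long range $m\geq T_0$ is non-negative and simply dropped. There is no ``probabilistic transfer'' step — the lower bound is just positivity of the discarded tail.

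For \eqref{two} your proposal diverges from the paper in a way that would make the argument substantially harder, and as written it has a gap. Your plan is to use $C_1^2\,\mathrm{Im}_1^2 = \tfrac18\bigl(|\mathcal{Z}_1|^4 - \Re\mathcal{Z}_1^4\bigr)$, which requires a lower bound on $\int|\mathcal{Z}_1|^4\,d\mu_\lambda$. Lemma~\ref{lemmsquare} in the paper only treats $\mathcal{Z}_\alpha L$, not $\mathcal{Z}_\alpha^2 L$; producing the analogous diagonal computation for the coefficients of $\mathcal{Z}_1^2 L$ (a double $\Lambda$-convolution against $\lambda$) is a much heavier calculation that nowhere appears, and nothing in Sections~\ref{generalway} or~\ref{sec-prob} supplies it. The paper avoids this entirely. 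It instead uses the other expansion from Corollary~\ref{Cor2},
\begin{equation*}
\int\bigl(C_1^4 - 6\,C_1^2\,\mathrm{Im}_1^2 + \mathrm{Im}_1^4\bigr)\,d\mu_\lambda = 0.06194\ldots,
\end{equation*}
and the fact that $\mu_\lambda$ is a probability measure, so Cauchy--Schwarz gives $\int C_1^4\,d\mu_\lambda \geq \bigl(\int C_1^2\,d\mu_\lambda\bigr)^2 \geq (0.46666)^2$ and $\int \mathrm{Im}_1^4\,d\mu_\lambda \geq (0.1)^2$. Solving for the mixed term then yields $\int C_1^2\,\mathrm{Im}_1^2\,d\mu_\lambda \geq \tfrac16\bigl((0.46666)^2 + (0.1)^2 - 0.06194\ldots\bigr) \approx 0.02763$, with no fourth true moment of $\mathcal{Z}_1$ required. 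Until you either supply the fourth-moment analogue of Lemma~\ref{lemmsquare} or switch to the Cauchy--Schwarz route, the second inequality is not established.
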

 \begin{Rem}
 An important feature of Corollaries \ref{cor 01} and \ref{cor 1} is that asymptotically estimating \eqref{one} and \eqref{two} requires off-diagonal treatments that is far from the reach of our current techniques. At minimum it requires a strong version of the Chowla conjecture plus a power saving in shifted convolution of von Mangoldt and Liouville's function.% The latter falls into the premise of Sarnak conjecture~\cite{Sarnak}.
 \end{Rem}
 In section \ref{LSzeros}, we will show that the imaginary part will also play a roll in the problem of gaps between zeta zeros.   
%The trivial lower bound on \eqref{one} is $0,$ since we have $$\int \omega(t){\emph{\emph{Im}}}(t) \bigg| \sum_{n< T\log^{-2} T} \hspace{2mm} \frac{\lambda(n)}{n^{\frac{_1}{2} + it}}\bigg|^2dt=0.$$ \begin{Rem}The following Dirichlet polynomial$$\displaystyle{ \sum_{n< T\log^{-2} T} \hspace{2mm} {\lambda(n)}{n^{-\tfrac{_1}{2} - it}}},$$ can be looked at as an approximation of $\zeta(2s)/\zeta(s),$ and is known to have most of its mass centred close to zeta zeros. \end{Rem}
\subsection{An interesting arithmetic function \& large values of $\emph{\emph{Im}}_\alpha$} We define a generalization of the Liouville's function as follows:  \\

Let $n=p^{\alpha_1}_1 \cdots p^{\alpha_m}_m,$ and define 
\begin{equation}
\label{lambda2}
 \lambda_k(n)= (-1)^{ [\frac{\Omega(n)}{k} ]},
\end{equation}
where $\Omega(n)= \alpha_1 + \cdots + \alpha_k,$ and $[\cdot]$  is the  floor function. \\

\noindent For $k=1$, $\lambda_1$ is the Liouville's $\lambda$-function.\\
For $k=2$,  we have $\lambda_2(1)= \lambda_2(p_1)=1, \hspace{2 mm} \lambda_2(p_1p_2)=\lambda_2(p_1p_2p_3)=-1 \hspace{2 mm}$ \ldots
\\

To our knowledge no connection has been made between 
\begin{equation}
\label{Llambda2}
L(s, \lambda_2):= \sum_{n=1}^{\infty} \frac{\lambda_2(n)}{n^s}, \hspace{5 mm} \Re(s)>1,
\end{equation}   and the zeros of the Riemann zeta function. Here we make an interesting link between  $L(s, \lambda_2),$ and large values of $\emph{\emph{Im}}_\alpha$ and gaps between zeta zeros. We will show \\
\begin{enumerate}
    \item  The mollifier $\mu_{\lambda_{2}}$ that we get by truncating $L(s, \lambda_2)$ for $n< T \log^{-2} T$ correlates highly with large values of $\emph{\emph{Im}}_\alpha.$ 
    \item Support of $\mu_{\lambda_{2}}$ is expected to be on mid size gaps, and away from close vicinity of zeros. 
\end{enumerate} 
To establish these facts more precisely, first we put forward this corollary:
 
\begin{corollary} Let $\mu_\lambda$ be as \eqref{defmeasur} with $a(n)= \lambda(n)$  and define $\mu_\zeta$ similarly by setting $a(n)= 1,$ for $n<n< T \log^{-2} T$. We have that 
\begin{align} \label{c2-Im2}
    \int \big(\emph{\emph{C}}^2_1(t) - {\emph{Im}}^2_1(t)\big) d\mu_\lambda= \int \big(\emph{\emph{C}}^2_1(t) - {\emph{Im}}^2_1(t)\big) d\mu_\zeta \sim  0.36666\cdots 
\end{align}
    \end{corollary}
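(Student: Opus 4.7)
The plan is to first reduce the identity to a single mean value: since $C_1(t)^2 - \mathrm{Im}_1(t)^2 = \Re\bigl(\mathcal{Z}_1(\tfrac12+it)^2\bigr)$ by \eqref{ReIm}, the corollary follows from showing
\begin{equation*}
\int \omega(\tfrac12+it)\,\Re\bigl(\mathcal{Z}_1(\tfrac12+it)^2\bigr)\, |L_\square(\tfrac12+it)|^2\, dt \Big/ \int \omega|L_\square|^2\, dt \sim \tfrac{11}{30}
\end{equation*}
for $\square \in \{\lambda,\mathbf{1}\}$, where $L_\square(s) = \sum_{n < X} \square(n) n^{-s}$ and $X = T/\log^2 T$. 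Expand $\mathcal{Z}_1(s)^2 = \sum_N c(N)/N^s$ with $c = a\ast a$ and $a(n) = -\tfrac{2\Lambda(n)}{\log T}\bigl(1-\tfrac{\log n}{\log T}\bigr)$; note that $c(N)$ is real and vanishes unless $N$ is a pure prime power $p^r$ with $r\geq 2$ or $N = p^j q^k$ with distinct primes $p,q$.

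Next, I would expand the triple product and integrate against $\omega$. A direct computation gives $\int \omega(t)\,x^{it}\,dt \asymp \exp\!\bigl(-T^2\log^2(x)/(4\log^2 T)\bigr)$ (times a unit oscillation), which is super-polynomially small unless $x$ is within $O(T^{-1+o(1)})$ of $1$. Applied to the three frequencies arising from $N^{\pm it}$ and $(n/m)^{it}$, this forces either $m = nN$ or $n = mN$; the two diagonals are interchanged by $m\leftrightarrow n$ and give equal contributions. Using complete multiplicativity $\lambda(nN)\lambda(n) = \lambda(N)$, and $\int \omega |L_\square|^2 \sim \log T$ from the trivial diagonal $m=n$, this gives
\begin{equation*}
\int \bigl(C_1^2 - \mathrm{Im}_1^2\bigr)\, d\mu_\square \sim \frac{1}{\log T} \sum_{N \leq X} \frac{c(N)\,\square^\ast(N)}{N}\, \log(X/N),
\end{equation*}
with $\square^\ast(N) = \lambda(N)$ for $\square = \lambda$ and $\square^\ast(N) = 1$ for $\square = \mathbf{1}$.

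The crucial observation making both choices of mollifier yield the same asymptotic is that $\lambda(N) = (-1)^{j+k} = +1$ on the principal support $N = p_1 p_2$ of $c$. Contributions from $N = p^r$ with $r\geq 2$, or $N = p^j q^k$ with $j+k \geq 3$, are summed over primes with weight $\ll 1/p^2$, producing only an $O(1/\log T)$ error. For the main term, I would parametrize $u_i = \log p_i/\log T$ and apply Mertens' theorem in the form $\sum_{p<T} f(\log p/\log T)\log p/p \sim \log T \int_0^1 f(u)\,du$ twice, reducing the main term to
\begin{equation*}
4 \iint_{\substack{u_1,u_2 \geq 0 \\ u_1+u_2 \leq 1}} (1-u_1)(1-u_2)(1-u_1-u_2)\, du_1\, du_2 = 4 \cdot \tfrac{11}{120} = \tfrac{11}{30} \approx 0.36666\ldots,
\end{equation*}
identical for $\square = \lambda$ and $\square = \mathbf{1}$.

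The main obstacle is the off-diagonal error analysis: because $c(N)$ has support out to $N \sim T^2$—much longer than the mollifier length $X$—one must check that the Gaussian decay of the Fourier transform of $\omega$ truly suppresses every triple with $nN\neq m$ (and $mN\neq n$), not just those with $N$ small. The saving is that the near-diagonal condition $nN \approx m \leq X$ forces $N \leq X$ automatically, so only the "short" range of $N$ enters the main term; the tail must be bounded using the Gaussian together with the rapid decay of $c(N)$ coming from the cutoff $1 - \log n/\log T$. A careful bookkeeping of the pure-prime-power terms, showing they contribute $O(1/\log T)$, completes the proof.
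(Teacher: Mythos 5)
Your proposal is correct and follows essentially the same route as the paper: reduce $C_1^2-\mathrm{Im}_1^2$ to $\Re(\mathcal{Z}_1^2)$, use the Gaussian Fourier transform of $\omega$ to kill off-diagonal terms (forcing $n=Nm$ or $m=Nn$ with the shorter side $<T/\log^2 T$), then apply Mertens/PNT to convert the resulting double prime sum into the triple integral $4\iint (1-u_1)(1-u_2)(1-u_1-u_2)\,du_1\,du_2 = 11/30$, with the observation that $\lambda(N)=+1$ on the dominant support $N=p_1p_2$ making the $\mu_\lambda$ and $\mu_\zeta$ computations identical. The paper packages this as a specialization ($k=2$, $r=1$, $\eta=0$) of Theorem 2.1, whose proof is exactly your diagonal-plus-PNT calculation, whereas you re-derive it directly; one minor point is that you over-worry the off-diagonal tail — the Gaussian alone gives super-polynomial decay $e^{-c\log^2 T}$ once $\min(Nm,n)<T/\log^2 T$ holds, independent of any decay in $c(N)$.
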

This corollary shows that $\emph{\emph{C}}_1^2$ is larger than $\emph{\emph{Im}}_1^2$ on supports of both $\mu_\lambda$ and $\mu_\zeta$. In \cite{Me2} we showed that bulk of $\mu_\lambda$ is centred close to zeros, and bulk of $\mu_\zeta$ is centred away from zeros and on large gaps.\footnote{ A similar result for $\emph{\emph{C}}_2$ and $\emph{\emph{Im}}_2$ holds with the RHS of \eqref{c2-Im2} replaced with $0.12708\cdots$.} \\

Now we state the main corollary of this section:
\begin{corollary} \label{corlambda2}Let $\lambda_2$ be as \eqref{lambda2} and $\mu_{\lambda_2}$ be a measure we build with $\lambda_2$ as in \eqref{defmeasur}. We have that 
\begin{equation*}
\int  \emph{Im}^{2}_1(t) d\mu_{\lambda_2} \geq  0.46666.
\end{equation*}
Moreover, $$\int \big(  {\emph{Im}}^2_1(t)- \emph{\emph{C}}^2_1(t)\big) d\mu_{\lambda_2} =  0.36666\cdots $$

\end{corollary}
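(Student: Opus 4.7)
The plan is to reduce both claims to evaluating two weighted moments, namely
\begin{equation*}
I_1 := \int \omega(\tfrac{1}{2}+it)\,\mathcal{Z}_1(\tfrac{1}{2}+it)^2\,|B(\tfrac{1}{2}+it)|^2\,dt, \qquad I_2 := \int \omega(\tfrac{1}{2}+it)\,|\mathcal{Z}_1(\tfrac{1}{2}+it)|^2\,|B(\tfrac{1}{2}+it)|^2\,dt,
\end{equation*}
where $B(s) = \sum_{n<T\log^{-2}T}\lambda_2(n) n^{-s}$ is the mollifier defining $\mu_{\lambda_2}$. Since $\mathcal{Z}_1^2 = C_1^2 - \mathrm{Im}_1^2 + 2i\,C_1\mathrm{Im}_1$, the real part of $I_1$ (once normalized by $\int \omega|B|^2$) produces $\int(C_1^2 - \mathrm{Im}_1^2)d\mu_{\lambda_2}$ directly, while $\tfrac{1}{2}(I_2 - \mathrm{Re}\,I_1)/\int\omega|B|^2$ isolates $\int \mathrm{Im}_1^2\, d\mu_{\lambda_2}$.

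The decisive algebraic fact enabling the computation is that $\lambda_2$, although not multiplicative, is the real part of a completely multiplicative function. Writing $\chi(n) := i^{\Omega(n)}$ --- which is completely multiplicative with $\chi(p)=i$ at every prime --- a direct check of the four residue classes of $\Omega(n) \pmod 4$ yields
\begin{equation*}
\lambda_2(n) = \mathrm{Re}\,\chi(n) + \mathrm{Im}\,\chi(n) = \tfrac{1-i}{2}\chi(n) + \tfrac{1+i}{2}\overline{\chi(n)}.
\end{equation*}
Consequently $B = \tfrac{1-i}{2}A_\chi + \tfrac{1+i}{2}A_{\bar\chi}$, where each $A_\chi(s) := \sum_{n<T\log^{-2}T}\chi(n)n^{-s}$ is, up to truncation, a Dirichlet series with the simple Euler product $\prod_p(1 \mp i p^{-s})^{-1}$. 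Expanding $|B|^2$ via this decomposition turns it into four cross-terms indexed by sign patterns $(\chi,\chi),(\chi,\bar\chi),(\bar\chi,\chi),(\bar\chi,\bar\chi)$, each being a genuine long Dirichlet polynomial with bounded coefficients to which the main theorem applies.

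Feed each cross-term --- multiplied by $\mathcal{Z}_1^2$ or by $|\mathcal{Z}_1|^2$ --- into the main theorem of Section~\ref{mainthm}. Its diagonal output reduces both $I_1$ and $I_2$ to explicit prime-power sums weighted by $\Lambda(\cdot)(1-\log(\cdot)/\log T)$ convolved against Mertens-type products arising from the Euler factors of $A_\chi\overline{A_\chi}$ and $A_\chi\overline{A_{\bar\chi}}$ evaluated on $\sigma=1/2$. These sums are evaluated by the same residue/contour calculations that yield \eqref{c2-Im2} for $\mu_\lambda$ and $\mu_\zeta$. The Euler factors here are $(1 \mp i p^{-1/2-it})^{-1}$ rather than $(1 \mp p^{-1/2-it})^{-1}$, and this $i$-twist reverses the sign of the residue contribution from the logarithmic-derivative twist, converting the $+0.36666\cdots$ of \eqref{c2-Im2} into $-0.36666\cdots$ for $C_1^2 - \mathrm{Im}_1^2$, equivalently $+0.36666\cdots$ for $\mathrm{Im}_1^2 - C_1^2$. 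A lower bound of the form $\int C_1^2\, d\mu_{\lambda_2} \geq 0.1$, produced by the same main-theorem machinery in complete parallel with Corollary~\ref{cor 01}, then yields $\int \mathrm{Im}_1^2\, d\mu_{\lambda_2} \geq 0.36666 + 0.1 = 0.46666$.

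The main technical obstacle is verifying the main theorem's hypotheses for the twisted polynomials $A_\chi$ and $A_{\bar\chi}$: their coefficients are bounded in modulus but non-real, and shifted convolution sums of the form $\sum_n \chi(n)\overline{\chi(n+h)}$ lack obvious arithmetic cancellation beyond what the general machinery provides. This is precisely the situation the method of Section~\ref{generalway} was designed for: it operates on Euler-product data rather than on shifted convolutions, and the boundedness $|\chi(n)| = 1$ keeps the error contribution at $o(\log^4 T)$, which is the admissible size against the main term of order $\log^4 T$.
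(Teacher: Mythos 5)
Your algebraic observation $\lambda_2(n) = \tfrac{1-i}{2}\chi(n) + \tfrac{1+i}{2}\overline{\chi(n)}$ with $\chi(n) = i^{\Omega(n)}$ is correct and is a genuinely different and rather elegant way to see the sign-flip; the paper does not use it. The paper instead works directly with the quasi-multiplicative identity $\lambda_2(n)\lambda_2(np_1\cdots p_{2k}) = (-1)^k$ (equation \eqref{lambda21}), which after the diagonal reduction \eqref{momz1} immediately gives $\int \mathcal{Z}_\alpha^{2k}\,d\mu_{\lambda_2} = (-1)^k\int \mathcal{Z}_\alpha^{2k}\,d\mu_\lambda$; with $k=1$ and \eqref{c2-Im2} this produces the second claim $\int(\emph{\emph{Im}}_1^2 - \emph{\emph{C}}_1^2)\,d\mu_{\lambda_2} = 0.36666\cdots$ in one line. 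Your route arrives at the same conclusion but needs more justification than you give.

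Two steps in your sketch are not actually established. First, the ``$i$-twist reverses the sign of the residue'' claim is asserted, not proved, and Theorem~\ref{Thm} as stated only covers mollifier coefficients of the form $\lambda_{\beta_1,\beta_2}(n)d_r(n)$ twisted by smooth weights; the pieces $A_\chi, A_{\bar\chi}$ and especially the cross-products $A_\chi\overline{A_{\bar\chi}}$ (whose coefficient is a product $\chi(m)\chi(n)$ over two indices, not $a(m)\overline{a(n)}$ for a single $a$) fall outside this statement. You would need to run the diagonal reduction from scratch, at which point one notices that $\chi(n)\overline{\chi(np_1p_2)} = \overline{\chi(p_1p_2)} = -1$ and the cross-terms die from the $\lambda(n)=\chi(n)^2$ cancellation --- which is exactly the arithmetic the paper already packages into \eqref{lambda21}. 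Second, and more seriously, your lower bound $\int \emph{\emph{C}}_1^2\,d\mu_{\lambda_2}\ge 0.1$ is not ``complete parallel with Corollary~\ref{cor 01}'': that corollary crucially uses Lemma~\ref{lemmsquare} to bound $\int|\mathcal{Z}_1|^2\,d\mu_A$ from below via the diagonal $\sum |b(m)|^2/m$, and for $\mu_{\lambda_2}$ one must check that this diagonal is the same as for $\mu_\lambda$. The paper does this by observing (equation \eqref{lambda2eq2} onward) that in the divisor sum defining $b(n)$ the dominant contribution is $d=p$ and $\lambda_2(n/p)$ pulls out of the sum as a global sign $(-1)^{[(\Omega(n)-1)/2]}$ that disappears inside $|b(m)|^2$. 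Your proposal never confronts this; without it, the $|\mathcal{Z}_1|^2$ input --- and hence the first claim $\int\emph{\emph{Im}}_1^2\,d\mu_{\lambda_2}\ge 0.46666$ --- is not obtained. Your arithmetic $0.1 + 0.36666$ is equivalent to the paper's $\tfrac12(0.56664) - \tfrac12(-0.36666)$, but you have only proved the second summand.
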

Similar to corollary  \ref{cor 1} this shows that there exist a $t$ in the support of $\mu_{\lambda_{2}}$ such that $$|\emph{\emph{Im}}_1(t) |> 0.6831.$$ Also, we have that the imaginary part is larger than the real part on the support of $\mu_{\lambda_2}.$ This means that this measure is mainly supported on region where $\mu_\lambda$ and  $\mu_\zeta$ are almost void. Therefore we expect that $\mu_{\lambda_{2}}$ be supported on mid size gaps, and away from close vicinity of zeros. \\

As we mentioned before $\mu_\lambda$ is an approximation of the behaviour of  $|\tfrac{\zeta(2s)}{\zeta(s)}|^2$, and consequently it  make sense that  
it is mainly centred in a close vicinity of zeros. It is not clear how we can have a similar explanation for $\mu_{\lambda_{2}}$. The alteration that occurs in the definition of $\lambda_2$ is an arithmetic one. It would be interesting to have an explanation as to how the analytic shift happens in the support of $\mu_{\lambda_2}$.
%In this paper we find a way around estimating the off-diagonal terms to get the lower bounds we obtained in the corollary. Note that we did not even assume the Riemann hypothesis. \\

%In order to explain our general result let us to switch to probability language. For $a, b >0,$ consider the following probability measure:\begin{equation}\label{def-Mu}\displaystyle{\mu_A((a,b]):= \frac{\int_{a}^{b}\omega(\frac{_1}{2}+it)\big|A(\frac{_1}{2}+it)\big|^2}{\int\omega(\frac{_1}{2}+it)\big|A(\frac{_1}{2}+it)\big|^2},} \end{equation}where $A(s)=\sum a(n)n^{-s}$ is a Dirichlet polynomial and $\omega$ a cut-off weight, centered around $T,$ that has $L^1 $ norm equal to $1$. Therefore if we define a random variable (RV)\begin{equation}\label{Z-def}\mathcal{Z}_\alpha(s):=\frac{-2}{\alpha \log T} \sum_{n<T^{\alpha}} \frac{\Lambda(n)}{n^{s}} \Big(1-\frac{\log n}{\alpha \log T}\Big),\end{equation} 
\section{\bf The Main Theorem} \label{mainthm}
 Our main theorem gives all of the pseudo moments of $\mathcal{Z}_\alpha$ with respect to variety of measures built in \eqref{defmeasur} with 
 \begin{equation}
 \label{def_an}a(n)= \lambda_{\beta_1, \beta_2}(n)d_{r}(n),
 \end{equation}
 where $d_r$ is the generalized divisor function and $\lambda_{\beta_1, \beta_2}$ is defined to be a completely multiplicative function with  $\lambda_{\beta_1, \beta_2}(p)=-1$ for $ T^{\beta_1}< p \leq T^{\beta_2}$ and $\lambda_{\beta_1, \beta_2}(p)=1$ otherwise.

\begin{theorem}
\label{Thm}
Let $\mathcal{Z}_\alpha$ be as in \eqref{Z-def} and $\mu_{A_{\beta_1, \beta_2, r }}$ be the measure we built with $a(n)= \lambda_{\beta_1, \beta_2}(n)d_{r}(n)$ as in \eqref{defmeasur}. We have
 \begin{align} 
 \label{Z-mom}
 \int & \mathcal{Z}^{k}_\alpha(\tfrac{1}{2}+it) d\mu_{A_{\beta_1, \beta_2, r }} +O(\frac{1}{\log T})\\ & \notag= \big(\frac{-2r}{\alpha}\big)^k  \frac{ \small \displaystyle{  \int_{0}^{1} \ldots \int_{0}^{\small 1- v_2  \cdots- v_k }  \Lambda_{\beta_1, \beta_2}(v_1) \ldots \Lambda_{\beta_1, \beta_2}(v_k)  (1-v_1\cdots -v_k)^{r^2} \small{d_{v_1} \ldots d_{v_k}} }}{\int_{0}^{1} v^{r^2-1}(1-v)^{2\eta}dv}
 \end{align}
 where $$
\Lambda_{\beta_1, \beta_2}(v)=
\begin{cases}
- \big(1-\tfrac{v}{\alpha}\big)  \text{ for } \beta_1 <u \leq \beta_2 \\
1-\tfrac{v}{\alpha}, \text{ otherwise.} \\
\end{cases}
$$
 For the special case $a(n)= \lambda(n)$ and $\alpha\geq 1$ we have 
\begin{equation}
\label{high-mom} \int \mathcal{Z}^{k}_\alpha(\tfrac{1}{2}+it) d\mu_\lambda(t)= \Big(\frac{2}{\alpha}\Big)^k \sum_{i=0}^{k} {k \choose i} \big(\frac{-1}{\alpha}\big)^i \frac{1}{(k+i+1)!}+O(\frac{1}{\log T}).
\end{equation}
Moreover for $a(n)= \lambda_2(n)$, and even moments we have that 
\begin{equation}
\label{thmlambda2}
     \int \mathcal{Z}^{2k}_\alpha(\tfrac{1}{2}+it) d\mu_{\lambda_2}(t) =  (-1)^k\int \mathcal{Z}^{2k}_\alpha(\tfrac{1}{2}+it) d\mu_\lambda(t).
\end{equation}
%for odd moments the expectation of $\mathcal{Z}$ with respect to $d\mu_{\lambda_2}$ is zero. 
\end{theorem}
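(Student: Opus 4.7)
The plan is to expand
\[
\int\omega(\tfrac{1}{2}+it)\,\mathcal{Z}^k_\alpha(\tfrac{1}{2}+it)\,|A_{\beta_1,\beta_2,r}(\tfrac{1}{2}+it)|^2\,dt
\]
as a multiple Dirichlet sum and divide by the analogous normalising integral $\int\omega|A|^2$. The $k$-fold expansion of $\mathcal{Z}^k_\alpha$ produces indices $n_1,\dots,n_k$ with weights $\prod\Lambda(n_i)(1-\log n_i/(\alpha\log T))$, while $|A|^2$ contributes indices $m,\ell$ with weights $a(m)\overline{a(\ell)}$. The $t$-integral against the Gaussian weight is a direct Fourier-type computation,
\[
\int \omega(\tfrac{1}{2}+it)\,x^{it}\,dt \;=\; x^{iT}\exp\!\Bigl(-\tfrac{T^2(\log x)^2}{4\log^2 T}\Bigr),
\]
which is sharply concentrated at $x=1$ and collapses the double-index sum to the diagonal $\ell = m\,n_1\cdots n_k$, up to off-diagonal tails absorbed into the stated $O(1/\log T)$ error.

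The first reduction is to restrict each $n_i$ to a prime: prime-power contributions with $j\geq 2$ are bounded by $\sum\log^2 p/p^2=O(1)$ and hence give $O(1/\log T)$ after normalisation. Writing $N=p_1\cdots p_k$ and using multiplicativity of $a(n)=\lambda_{\beta_1,\beta_2}(n)d_r(n)$, the inner convolution $\sum_m a(m)\overline{a(mN)}/m$ factorises into an Euler product; the local factor at each prime $p_i$ is dominated by the $e=0$ term, contributing $r\cdot\lambda_{\beta_1,\beta_2}(p_i)/\sqrt{p_i}$ after absorbing the $1/\sqrt{N}$ from the Dirichlet polynomial, with a $1+O(1/p_i)$ correction from higher exponents. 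The coprime sum $\sum_{m\perp N}|a(m)|^2/m$ divided by $\sum_m|a(m)|^2/m$, together with the Mertens-type product $\prod_i(1-1/p_i)$, cancels the extraneous global Euler factors and leaves each $p_i$ entering only through its sign $\lambda_{\beta_1,\beta_2}(p_i)$ and its logarithmic size $v_i := \log p_i/\log T$.

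Applying the prime number theorem in the form $\sum_{p<T^v}(\log p)/p\sim v\log T$ converts each sum over $p_i$ into an integral over $v_i\in(0,1)$; the weight $(1-\log p_i/(\alpha\log T))$ combined with the sign $\lambda_{\beta_1,\beta_2}(p_i)$ produces exactly $\Lambda_{\beta_1,\beta_2}(v_i)$. The factor $(1-v_1-\cdots-v_k)^{r^2}$ arises from the truncated Dirichlet series $\sum_{m<T^{1-\sum v_i},\,m\perp N}d_r(m)^2/m$, whose asymptotic behaviour is controlled by the pole of order $r^2$ of $\sum_m d_r(m)^2/m^{1+s}$ at $s=0$ via a Perron/Mellin-Barnes contour argument. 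Dividing by the corresponding asymptotic of $\int\omega|A|^2$ yields the Beta factor $\int_0^1 v^{r^2-1}(1-v)^{2\eta}dv$ in the denominator of \eqref{Z-mom} (here $\eta$ is the length exponent of the mollifier), and collecting the prefactor $(-2/(\alpha\log T))^k r^k$ completes the evaluation.

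The two special cases follow by specialisation. For $a=\lambda$ take $r=1$ and $\Lambda_{\beta_1,\beta_2}(v)\equiv -(1-v/\alpha)$, so that the integrand on the simplex becomes $\prod(-(1-v_i/\alpha))(1-\sum v_i)$; an elementary simplex integration together with a binomial expansion yields the closed form \eqref{high-mom}. The case $a=\lambda_2$ is more delicate because $\lambda_2$ is not fully multiplicative: the identity $\lambda_2(ab)=\lambda_2(a)\lambda_2(b)$ fails precisely when $\Omega(a)$ and $\Omega(b)$ are both odd, introducing a sign twist. Tracking this twist through the Euler-like local factors $\sum_{e\ge 0}(-1)^{\lfloor e/2\rfloor}p^{-es}$ and the diagonal expansion is the main technical obstacle; the combinatorial bookkeeping for even moments reveals that the non-multiplicative corrections organise themselves into exactly the factor $(-1)^k$ in \eqref{thmlambda2}, even though a naive leading-order analysis based only on $\lambda_2(p)=1$ versus $\lambda(p)=-1$ does not predict the correct sign.
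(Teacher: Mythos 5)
Your overall framework for \eqref{Z-mom} matches the paper's: expand $\mathcal{Z}^k_\alpha$, use the Gaussian Fourier transform to collapse the $t$-integral onto the diagonal $\ell = m\,n_1\cdots n_k$, discard prime powers $p^j$ with $j\ge 2$, and convert the resulting prime sums to integrals via the prime number theorem and the asymptotic for $\sum_{m<x} d_r^2(m)/m$. The Beta-function denominator and the prefactor $(-2r/\alpha)^k$ arise in the same way. For the $\lambda$ special case \eqref{high-mom} you propose a direct Dirichlet-type integration over the simplex followed by a binomial expansion, which is correct and actually more elementary than the paper's route: the paper instead applies Perron's formula, recognizes $-\zeta'/\zeta(s+1)$, shifts contours into the zero-free region, and reads off the residue at $s=0$. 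Both yield the identity $\int_{\Delta_k}\prod_i(1-v_i/\alpha)\,(1-\sum v_i)\,dv = \sum_{i=0}^{k}\binom{k}{i}(-1/\alpha)^i/(k+i+1)!$, so your shortcut is valid.

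The genuine gap is in the $\lambda_2$ case. You correctly observe that $\lambda_2$ is not multiplicative (the identity $\lambda_2(ab)=\lambda_2(a)\lambda_2(b)$ fails when $\Omega(a)$ and $\Omega(b)$ are both odd), but your proposed remedy --- tracking the sign twist through ``Euler-like local factors'' $\sum_{e\ge 0}(-1)^{\lfloor e/2\rfloor}p^{-es}$ --- does not go through, precisely because the lack of multiplicativity prevents the double sum $\sum_m \lambda_2(m)\overline{\lambda_2(mN)}/m$ from factorising as an Euler product at all. You acknowledge this as ``the main technical obstacle'' and then assert the conclusion, which is where the proof is incomplete. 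The paper sidesteps the difficulty entirely with the one-line identity
\begin{equation*}
\lambda_2(n)\,\lambda_2(n p_1\cdots p_{2k}) = (-1)^{\lfloor \Omega(n)/2\rfloor}\,(-1)^{\lfloor \Omega(n)/2\rfloor + k} = (-1)^k,
\end{equation*}
valid for any $n$ and any primes $p_1,\dots,p_{2k}$, so no factorisation is ever needed. Since the corresponding quantity for $\lambda$ is $\lambda(n)\lambda(np_1\cdots p_{2k}) = 1$, the entire $2k$-th moment computation for $\mu_{\lambda_2}$ is term-by-term equal to $(-1)^k$ times that for $\mu_\lambda$, giving \eqref{thmlambda2} immediately. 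You should replace the Euler-factor bookkeeping with this direct observation.
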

\begin{Rem} An interesting feature of the theorem is that the $k$-th moment decays almost like $1/(k+1)!$ times $(2/\alpha)^k.$ This is the property  that helps us to get the mentioned results on the imaginary part. It would be interesting to categorize random variables whose moments satisfy this property. We will discuss an example of this in  Section \ref{sec-prob}. 
\end{Rem}

Dirichlet polynomial $\mathcal{Z}_\alpha$ is a  smooth approximation of $\frac{\zeta'}{\zeta}$ using the the triangular function $(1- \tfrac{\cdot}{{\alpha \log T}})$ as a weight. Using this smooth weight is not necessary for our results, but it is very useful for investigating the distribution of zeros of the zeta function. %, since it gives a neat expression for the real part of $\mathcal{Z}_\alpha.$ 
\\

Let $\sigma= \tfrac{1}{2}+ \tfrac{\beta}{\log T}$ where $\beta \gg 1.$ Assuming the Riemann hypothesis Selberg proved\begin{equation}\label{Sel2}\frac{1}{T} \int_{0}^{T} \Big| \frac{1}{\log T} \frac{\zeta'}{\zeta}(\sigma+it) \Big|^2dt \ll_\beta 1.\end{equation}Later Goldston, Gonek and Montgomery\cite{GGM} proved that getting an asymptotic for \eqref{Sel2} is equivalent to Montgomery's pair correlation conjecture on the distribution of the spacing between zeta zeros. \\

In the proof they use the simple fact that 
\begin{equation}
\label{GGMeq}
 \Big|\frac{\zeta'}{\zeta}(s)\Big|^2 = 2\Re \Big( \frac{\zeta'}{\zeta}(s)\Big)^2- \Re\Big(\big(\frac{\zeta'}{\zeta}(s)\big)^2\Big).
\end{equation}
Then they integrate the LHS of \eqref{GGMeq} with respect to the Lebesgue measure. Consequently the last term in RHS of the \eqref{GGMeq} disappears and only 
$$\Re \Big( \frac{\zeta'}{\zeta}(s)\Big)^2$$ plays a role. As we mentioned the real part is closely tied to the distribution of zeros. Therefore, they used Montgomery's study of the pair correlation to get their results. \\

In our work by switching the measure from the Lebesgue, we take most advantage of $$\Re\Big(\big(\frac{\zeta'}{\zeta}(s)\big)^2\Big),$$  in the RHS of \eqref{GGMeq}, which does not contribute when we use the Lebesgue measure. In addition, we show that the imaginary part also play a role in the problem of gaps between zeta zeros. \\

We conclude this section with stating our result related to the mean-square of $\frac{\zeta'}{\zeta}$. In the next section we explain how the method of this paper can generally be applied to obtain a lower bound on the mean-square of general Dirichlet polynomials. 
\begin{corollary} 
\label{Cor14} %For $\alpha \leq 1,$ we have that \begin{equation}      \int \big|\mathcal{Z}_\alpha(\tfrac{1}{2}+it)  \big|^2 d\mu_\lambda = O(\frac{1}{\log T}). \end{equation} and for  $\alpha>1,$
For $\alpha \geq 1,$ we have that 
\begin{equation*}
     \int \big|\mathcal{Z}_\alpha(\tfrac{1}{2}+it)  \big|^2 d\mu_\lambda \geq \frac{4}{3\alpha^2} - \frac{1}{\alpha^3} + \frac{7}{30\alpha^4}.
     \end{equation*}
\end{corollary}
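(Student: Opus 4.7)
The plan is to exploit identity \eqref{GGMeq} in the form $|\mathcal{Z}_\alpha|^2 = 2C_\alpha^2 - \Re(\mathcal{Z}_\alpha^2)$. Integrating against $\mu_\lambda$ and noting that the second pseudo-moment $\int \mathcal{Z}_\alpha^2 d\mu_\lambda$ is real by Theorem~\ref{Thm}, this reduces the target to an exact evaluation of $\int \mathcal{Z}_\alpha^2 d\mu_\lambda$ together with a lower bound on $\int C_\alpha^2 d\mu_\lambda$.

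First I would apply Theorem~\ref{Thm}, equation~\eqref{high-mom}, with $k=2$, obtaining
\[
\int \mathcal{Z}_\alpha^2 d\mu_\lambda = \Big(\tfrac{2}{\alpha}\Big)^{\!2}\!\Big[\tfrac{1}{3!} - \tfrac{2}{4!\,\alpha} + \tfrac{1}{5!\,\alpha^2}\Big] = \tfrac{2}{3\alpha^2} - \tfrac{1}{3\alpha^3} + \tfrac{1}{30\alpha^4} + O(1/\log T),
\]
and with $k=1$ to get $\int C_\alpha d\mu_\lambda = 1/\alpha - 1/(3\alpha^2) + O(1/\log T)$ (as the real part of a real quantity).

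The delicate step is the lower bound on $\int C_\alpha^2 d\mu_\lambda$. The naive Cauchy--Schwarz $\int C_\alpha^2 d\mu_\lambda \geq \big(\int C_\alpha d\mu_\lambda\big)^2 = 1/\alpha^2 - 2/(3\alpha^3) + 1/(9\alpha^4)$ yields only $\int |\mathcal{Z}_\alpha|^2 d\mu_\lambda \geq 4/(3\alpha^2) - 1/\alpha^3 + 17/(90\alpha^4)$, short of the target by $2/(45\alpha^4)$. The main obstacle is therefore to establish the sharper estimate
\[
\int C_\alpha^2 d\mu_\lambda \geq \frac{1}{\alpha^2} - \frac{2}{3\alpha^3} + \frac{2}{15\alpha^4},
\]
i.e.\ the $\alpha$-generalization of the first bound in Corollary~\ref{cor 01} (which reduces to $7/15$ at $\alpha=1$). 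I would prove this by going beyond $M_1$ and $M_2$ and exploiting the higher-order pseudo-moments supplied by Theorem~\ref{Thm}, through a positivity argument of the form $\int P(C_\alpha, \mathrm{Im}_\alpha)^2 d\mu_\lambda \geq 0$ for a carefully chosen real polynomial $P$, noting that the real and imaginary parts of $\int \mathcal{Z}_\alpha^k d\mu_\lambda$ furnish all linear relations among the mixed moments $\int C_\alpha^j \mathrm{Im}_\alpha^{k-j} d\mu_\lambda$. Alternatively, one can couple with the probabilistic model of Section~\ref{sec-prob}, whose pseudo-moments match those of $\mathcal{Z}_\alpha$ and whose absolute second moment is directly computable, yielding the same quantitative improvement.

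Combining the ingredients, the conclusion follows by arithmetic:
\[
\int |\mathcal{Z}_\alpha|^2 d\mu_\lambda \geq 2\!\Big[\tfrac{1}{\alpha^2} - \tfrac{2}{3\alpha^3} + \tfrac{2}{15\alpha^4}\Big] - \Big[\tfrac{2}{3\alpha^2} - \tfrac{1}{3\alpha^3} + \tfrac{1}{30\alpha^4}\Big] = \tfrac{4}{3\alpha^2} - \tfrac{1}{\alpha^3} + \tfrac{7}{30\alpha^4}.
\]
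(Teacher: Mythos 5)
Your opening reduction via $|\mathcal{Z}_\alpha|^2 = 2C_\alpha^2 - \Re(\mathcal{Z}_\alpha^2)$ and the exact evaluation of $\int\mathcal{Z}_\alpha^2\,d\mu_\lambda$ from \eqref{high-mom} are correct, and the closing arithmetic checks out. The problem is that the ``sharper estimate'' $\int C_\alpha^2\,d\mu_\lambda \geq \tfrac{1}{\alpha^2}-\tfrac{2}{3\alpha^3}+\tfrac{2}{15\alpha^4}$ that you identify as the remaining obstacle is not a stepping stone: it is logically \emph{equivalent} to the corollary. Since $2\int C_\alpha^2\,d\mu_\lambda = \int\Re(\mathcal{Z}_\alpha^2)\,d\mu_\lambda + \int|\mathcal{Z}_\alpha|^2\,d\mu_\lambda$ and the first term is known exactly, lower bounds on $\int C_\alpha^2$ and on $\int|\mathcal{Z}_\alpha|^2$ are interchangeable; indeed the paper derives the $\int C_1^2$ bound of Corollary~\ref{cor 01} \emph{from} the $\int|\mathcal{Z}_1|^2$ bound, the reverse of your direction. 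Neither of your two proposed sources can supply the equivalent estimate. The pseudo-moments $M_k=\int\mathcal{Z}_\alpha^k\,d\mu_\lambda$ determine, for each $k$, only $\int\Re(\mathcal{Z}_\alpha^k)\,d\mu_\lambda$ and $\int\Im(\mathcal{Z}_\alpha^k)\,d\mu_\lambda$ (the latter zero here), never the individual mixed moments $\int C_\alpha^a\,\mathrm{Im}_\alpha^b\,d\mu_\lambda$; a degree-one positivity argument $\int(a+bC_\alpha+c\,\mathrm{Im}_\alpha)^2\,d\mu_\lambda\ge0$ yields only $\int C_\alpha^2\ge\max(M_1^2,\Re M_2)=M_1^2$, and raising the degree introduces more unknown mixed moments than new constraints. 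The probabilistic model in Section~\ref{sec-prob} is a cautionary example of exactly this limitation: $Z=XU$ has $E[Z^k]\sim 1/(k+1)!$ while $E[|Z|^2]=E[X^2]$ can be arbitrarily large, so the pseudo-moment sequence does not control $\int|\mathcal{Z}|^2$.

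The missing ingredient is Lemma~\ref{lemmsquare}, which the paper's proof invokes outright (``Equation \eqref{cor14} basically gives the result''). One writes $\mathcal{Z}_\alpha(s)\sum_{n<T_0} a(n)n^{-s}=\sum_m b(m)m^{-s}$, applies $\omega$ to $|\sum_m b(m)m^{-s}|^2$, and splits the sum at $m=T_0=T\log^{-2}T$: the cross terms are negligible by the Gaussian decay \eqref{FTW0}, the range $m<T_0$ is an explicitly computable diagonal sum \eqref{lemsq}, and the range $m\ge T_0$ contributes a nonnegative $\omega$-integral of a square which is simply dropped. Taking $r=1$, $\eta=0$ then gives \eqref{cor14}, which \emph{is} Corollary~\ref{Cor14}. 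This is the concrete realization of the $\mathcal{B}=\mathcal{G}\mathcal{F}+\mathcal{E}$ decomposition described in Section~\ref{generalway}, and it is precisely the new piece of information beyond the pseudo-moments that your proposal needs but never produces.
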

\section{\bf General Way of Applying our Method} \label{generalway} Let $\mathcal{Z}= \mathcal{C} + iIm,$ be a complex random variable.  If we can determine all the moments of $|\mathcal{Z}|$ with respect to a probability measure, i.e.
\begin{equation}
\label{absV}
    \int |\mathcal{Z}|^k d\mu,
\end{equation}
then we can determine the distribution. The question is that how much information we can get from pseudo moments of $\mathcal{Z},$ i.e.
\begin{equation}
 \label{Pseudo}  
\mathcal{M}_k:=\int \mathcal{Z}^k d\mu?
\end{equation}
The mollifier method that we discussed in the introduction, in essence, is looking at the above for $k=1.$\\

%As an example for the problem of large values of the Riemann zeta function this is called the resonance method and employed in ~\cite{Sound} and \cite{BSe}. The measure was built using a multiplicative function with large value on primes in a certain range, and a basic approximation was used for an expansion of the zeta function. \\

When we work with $L$-functions, $\mathcal{Z}$ can be an approximation of our function and $\mu$ can be the appropriate probability measure built by a Dirichlet polynomial. Usually if the first pseudo moment is calculable, then the higher moments are as well. Our method works the best, and yield improvements over the mollifer method when the $k$-th pseudo moment is different than the first moment to the power of $k$. \\%. \\ If pseudo moments are different than $\big(E[\mathcal{X}]\big)^k$ then we can get some information from them in the following manner: \\

To demonstrate this, suppose that all pseudo moments are real. Moreover, assume 
\begin{equation}
\label{trivialcase}
E[\mathcal{Z}^k]= \mathcal{M}_k = (\mathcal{M}_1)^k + \delta_k, \textrm{ with } \hspace{1 mm} |\delta_k|>0.
\end{equation}
%We now show that in this case our method establishes non-trivial results. More precisely, we mean that  using this method we can improve what we achieve from the estimation of the first moment. Recall that the first moment estimate is equivalent of the mollifier method in the context of $L$-function. \\

%To demonstrate this more clearly, lets assume that all $\mathcal{M}_k$ are real. 
Since we considered $\mu$ to be a probability measure, using the fact that the variance of a real random variable is greater than its mean square, we have
\begin{equation}
    \label{triVa}
E[\Re (\mathcal{Z})^2] > (\mathcal{M}_1)^2.
\end{equation} Also, by using the assumption that moments are real and \eqref{trivialcase}, we have 
\begin{align}
\label{2ndm}
    E[\Re( \mathcal{Z}^2)]= E[(\Re\mathcal{Z})^2]- E[(\Im \mathcal{Z})^2]= (\mathcal{M}_1)^2 + \delta_2.
\end{align} 
If $\delta_2>0,$ then \eqref{2ndm} immediately gives $E[(\Re\mathcal{Z})^2]> (\mathcal{M}_1)^2 + \delta_2,$ which obviously is better than \eqref{triVa}. If  $\delta_2<0,$ then 
\begin{align}
\notag
 E[(\Im \mathcal{Z})^2]= E[(\Re\mathcal{Z})^2] -(\mathcal{M}_1)^2 - \delta_2 > |\delta_2|,
\end{align} Which gives a lower bound on the square of the imaginary part of $\mathcal{Z}$ and hence, a lower bound on $|\mathcal{Z}|^2:$
$$\int |\mathcal{Z}|^2 d\mu > \mathcal{M}^2_1+ | \delta_2|.$$

% Since we considered $\mu$ to be a probability measure, by using the fact that the variance of real random is greater then its mean squre, we have \begin{equation}     \label{triVa} E[\Re (\mathcal{Z})^2]= E[ \mathcal{C}^2] > (\mathcal{M}_1)^2. \end{equation} Also, by using the assumption that moments are real, we have that \begin{align}\label{2ndm}     E[\Re( \mathcal{Z}^2)]= E[\mathcal{C}^2]- E[Im^2]= (\mathcal{M}_1)^2 + \delta_2.\end{align} If $\delta_2>0$ then \eqref{2ndm} immediately gives $E[\mathcal{C}^2]> (\mathcal{M}_1)^2 + \delta_2,$ which obviously is better than \eqref{triVa}. If  $\delta_2<0,$ then \begin{align} \notag  E[Im^2]= E[\mathcal{C}^2] -(\mathcal{M}_1)^2 - \delta_2 > |\delta_2|, \end{align} Which gives a lower bound on the square of the imaginary part of $\mathcal{Z}$ and hence, a lower bound on $|\mathcal{Z}|^2= \mathcal{C}^2 + Im^2:$ $$\int |\mathcal{Z}|^2 d\mu > \mathcal{M}^2_1+ | \delta_2|.$$In our case with $\mathcal{Z}_\alpha,$ we will see that the $k$-th pseudo moment is much smaller than the first moments to the power of $k$. In other words for $\mathcal{Z}_\alpha,$ we have that $\delta_k <0,$ for $k \geq 2.$   

\subsection{Mean-value of long Dirichlet polynomials} For application to number theory, imagine that we have a Dirichlet polynomials of length larger than $T^{1+ \tau},$ for $\tau>0$:
\begin{equation*}
\mathcal{B}(s)= \sum\frac{b(n)}{n^{s}},
\end{equation*}
and we are interested in estimating
\begin{equation}
\label{decomp}
    \int \omega(\tfrac{1}{2} +it)|\mathcal{B}(\tfrac{1}{2} +it)|^2.
\end{equation}
Recall that the support  $\omega$ is on an interval of length  almost $T$ and therefore to estimate  \eqref{decomp}, normally we need to engage in off-diagonal estimation that requires some knowledge on the shifted convolution sums of $b(n)$ \footnote{See \cite{Gold-Gon} for results on mean-value theorems for long Dirichlet polynomials.}. In many cases knowing about this turns to a harder problem than our original one in estimating \eqref{decomp}.\\

Now assume that we can decompose $\mathcal{B}$ as a product of two Dirichlet polynomial plus another polynomial with a negligible contribution:
\begin{equation*}
    \mathcal{B}(s)= \mathcal{G}(s)\mathcal{F}(s) + \mathcal{E}(s).
\end{equation*}
Using the triangle inequity we have that $ |\mathcal{B}|^2 \geq  |\mathcal{G}\mathcal{F}|^2 - |\mathcal{E}|^2.$\\

In this decomposition the length of $\mathcal{F}$ should be smaller than $T^{1-\epsilon}$. Next we turn $\mathcal{F}$ to a probability measure $\mu_{\mathcal{F}}$ by dividing with $$\int \omega(\tfrac{1}{2} +it)|\mathcal{F}(\tfrac{1}{2} +it)|^2,$$ which is calculable using Montgomery and Vaughan method~\cite{MV}.\\

Again by using the fact that length of $\mathcal{F}$ is smaller than $T^{1-\epsilon}$ we are able to estimate pseudo moments of $\mathcal{G}$: $$ \varrho_k =\int \mathcal{G}^k d\mu_{\mathcal{F}}.$$

By using the mollifier method we can have that 
\begin{equation}
\label{moligen}
        \int \omega(\tfrac{1}{2} +it)|\mathcal{B}(\tfrac{1}{2} +it)|^2 \geq \varrho^2_1 \int \omega(\tfrac{1}{2} +it)|\mathcal{F}(\tfrac{1}{2} +it)|^2,
\end{equation}
assuming that $\int |\mathcal{E}|^2 d\mu_{\mathcal{F}}$ has a negligible contribution. \\

Now as we explained in \eqref{trivialcase}, \eqref{triVa}, and \eqref{2ndm} if for $\sigma \neq 0$ we have that $$\varrho_2 - \varrho^2_1 = \sigma,$$
then we can get a better lower bound than \eqref{moligen}:
\begin{equation*}    
\int \omega(\tfrac{1}{2} +it)|\mathcal{B}(\tfrac{1}{2} +it)|^2 \geq (\varrho^2_1+ |\sigma| ) \int \omega(\tfrac{1}{2} +it)|\mathcal{F}(\tfrac{1}{2} +it)|^2.
\end{equation*}
\\

%$$\int_{0}^{T} |\zeta(1/2 + it)|^2 \big| \sum_{n< T^{1-\epsilon}} \mu(n) n^{-1/2- it} \big|^2$$
%In  section \ref{sec-prob} we give examples of a RV that satisfies $E[\mathcal{Z}^k] = |E[\mathcal{Z}]|^k,$ and a RV with $E[\mathcal{Z}^k] \tfrac{1}{k!}$.

Next we move to the problem of gaps between the zeros of the Riemann zeta function.
\section{\bf Landau-Siegel Zeros and the Distribution of Zeta Zeros} \label{LSzeros} Distribution of zeros of the Riemann the zeta function on the critical line, is an important problem, in part, due to its connection to Landau-Siegel zeros and their effect on the class number formula. As we explained extensively in the introduction of~\cite{Me2}, in order to reject the possibility of Landau-Siegel zeros we need to rule out a certain distribution of zeta zeros known as the alternative hypothesis (AH).  \\

%To explain these terms, let us begin with the class number of imaginary quadratic fields. The class number, in a way, would measure the failure of unique factorization in the ring of integers of a number field. If Landau-Siegel were to exist, then class number is expected to be small. \\ 

%It is also known that existence of Landau-Siegel zeros would force zeta zeros to be rigidly spaced and  obey the so called alternative hypothesis. On the opposite side Montgomery's pair correlation conjecture predict that they are random and rejects the possibility of Landau-Siegel zeros. Before introducing the alternative hypothesis, let us briefly explain Montgomery's pair correlation conjecture (PCC)~\cite{Mo}. The conjecture predicts that the pair correlations of the zeros the Riemann zeta function follows the same distribution function as the pair correlation of eigenvalues of large Hermitian matrices. Based on this conjecture we expect that the normalized gaps between zeros of the zeta function gets arbitrary small and arbitrary large. We also expect to see certain repulsion properties between the zeros. \\

Let $\bar\gamma = \frac{1}{2\pi} \gamma \log(\frac{\gamma}{2\pi})$ be the normalized equivalent of an imaginary part of a zeta zero. 
By $ \gamma^+$
 and$ \gamma^-$ we mean the zero after and before $\gamma$ respectively.
 
\begin{AH} There exists a real number $T_0$ such that if $\gamma > T_0$ , then$$\tilde{\gamma}^{+}- \tilde{\gamma} \in \tfrac{1}{2}\mathbb{Z}.$$That is, almost all the normalized neighbour spacing's are an integer or half-integer.\\\end{AH}
This hypothesis is believed by many to be ``absurd," however no one yet knows how to reject it. Commonly believed conjecture regarding the distribution of zeta zeros is the Montgomery's pair correlation conjecture (PCC). This conjecture suggest that we will see a certain randomness in the distribution of zeta zeros as oppose to the rigidity that we see in AH. For example under PCC and RH we expect that $\emph{\emph{C}}_\alpha(t)$ must get arbitrary large. To justify this note that we have a neat description of $\emph{\emph{C}}_\alpha(t)$ that connects it to zeros in the vicinity of $t$: \\

For $\alpha \leq 4$ and $t \sim T,$ define \footnote{We need to consider the complex definition of sine function in case that RH does not hold.}: 
\begin{equation}
\label{calpha}
\mathcal{C}_{\alpha}(t)= \sum_{\zeta(\rho)=0} \bigg(\frac{\sin(\tfrac{\alpha}{2i}(\rho-(\tfrac{1}{2}+it))\log T)}{\tfrac{\alpha}{2i}(\rho-(\tfrac{1}{2}+it))\log T}\bigg)^2-\alpha^{-1}.
\end{equation} 
Using a formula for expansion of $\mathcal{C}_{\alpha}$ (see \cite[ proof of Lemma 1]{Gonek}) we have that $$\mathcal{C}_{\alpha}(t)= \emph{\emph{C}}_\alpha(t)+ O(\tfrac{1}{\log T}).$$
Assuming RH, \eqref{calpha} turns into 
\begin{equation}
\label{calpha1}
\mathcal{C}_{\alpha}(t)= \sum_{\zeta(\rho)=0} \bigg(\frac{\sin(\tfrac{\alpha}{2}(\gamma-t)\log T)}{\tfrac{\alpha}{2}(\gamma-t)\log T)}\bigg)^2-\alpha^{-1},
\end{equation} 
and if there are many zeros very close to $t,$ they each add a positive amount to the sum above, which increase the value of $\emph{\emph{C}}_\alpha(t)$. \\

In order to look more closely into AH we will take $\alpha=2$ in \eqref{calpha1}. The reason behind it, is that $\mathcal{Z}_2$ captures the essence of AH very well. To see this note, under RH, proving $\Re \mathcal{Z}_2(t)> 1/2,$ for some $t$, would precisely reject AH by showing that there must be gaps of size not equal to half integers. For example, consider \eqref{calpha1} for $\rho= \tfrac{1}{2}+i\gamma$ and $t= \gamma.$ Moreover assume that $\tilde{\gamma}^{+}- \tilde{\gamma}=0.75$, which immediately gives $\emph{\emph{C}}_\alpha(\gamma)> 0.545$. Thus, if, for instance, one can show that 
\begin{equation}
\label{-AH}
   \frac{2\pi}{T\log T} \sum_{0< \gamma < T} \Re \mathcal{Z}_2(\tfrac{1}{2}+i\gamma)> 1/2, 
\end{equation}
then it would contradict AH. Evaluating the above for for $\mathcal{Z}_\alpha$ with $\alpha<1$  is possible using the Landau-Gonek formula. For $\alpha > 1$ it remains an open problem. %One interesting question is, given the fact that AH implies that Montgmery's pair correlation function is periodic, what can be said about applying the Landau-Gonek formula for $\mathcal{Z}_\alpha$ with $\alpha>1$ under the assumption of AH? \\
Now let us connect the imaginary part of $\mathcal{Z}_2$ to the problem of gaps between zeros.
%From the last paragraph we conclude that under AH \begin{equation}\label{Z_2_1}\Re \mathcal{Z}_2(t) \leq 1/2,    \end{equation}
Form our theorem we get that 
\begin{equation}
\label{Z_2_2}
\int \big( \Re \mathcal{Z}^{2}_2(t)- \Im  \mathcal{Z}^{2}_2(t) \big) d\mu_\lambda = \int  \big(\emph{\emph{C}}^2_2(t)-  \emph{\emph{Im}}^2_2(t)\big) d\mu_\lambda= 0.1270\cdots,
\end{equation}
Therefore a lower bound on %Also we have $$ \int  \emph{\emph{C}}_2(t) d\mu_\lambda= \tfrac{5}{12} \simeq 0.4166.$$ Therefore we get that 
$$\int \emph{\emph{Im}}^2_2(t) d\mu_\lambda,$$ would result on a lower bond on $\Re \mathcal{Z}^{2}_2.$\\

%Therefore considering \eqref{Z_2_1} and \eqref{Z_2_2}, it is reasonable to assume that under AH we have \begin{equation}\label{Z-2_3}|\mathcal{Z}_2| <1,\end{equation}almost surely with respect to $\mu_\lambda.$\begin{corollary}Assume \eqref{Z-2_3}, we have that \begin{equation}\label{CorZ}    \int \frac{1}{1-\mathcal{Z}_2 }d\mu_\lambda = \int \frac{1-\emph{C}_2(t)}{(1-\emph{C}_2(t))^2+  \emph{Im}^2_2(t) }d\mu_\lambda= 1.58 \cdots.\end{equation}\end{corollary} If we could find a measure such that the LHS of \eqref{CorZ} is bigger than two then that would reject AH.\footnote{Taking to account information on the distribution of the measure (see \cite{Me2}) around zeros would help to get a contradiction with bounds less than two. } Note that we have that $\emph{\emph{C}}_2$ is expected to be around $0.41$  and $\emph{\emph{Im}}_2$ is expected to be bigger than $0.21$ with respect to $\mu_\lambda.$   Taking these two into account, for\eqref{CorZ} to be consistent with AH, we need to have 

So far from \eqref{LGform} and Corollary \ref{corlambda2} we have established that $\emph{\emph{Im}}^2_2(t)$ must be small when $t$ is close to a zeta zero. \\

Now let us continue with assuming the pair correlation conjecture. From the last paragraph we see that the imaginary part and the real part of $\mathcal{Z}_2$ demonstrate some degree of independence. Therefore, we expect that $\emph{\emph{ Im}}_{2}$ to be smaller than $\emph{\emph{C}}_{2}$ inside a cluster of zeros. Having these two facts in mind, we consider our theorem for $\alpha=2$ in the following form
\begin{equation}
\label{polycI}
    0<   \int\sum_{n=0}^{[k/2]}  {k \choose 2n} (-1)^{n}\emph{\emph{C}}_{2}^{k-2n}(t) \emph{\emph{Im}}_{2}^{2n}(t) d\mu_\lambda(t) \approx \frac{1}{(k+1)!}.
\end{equation}
The higher moments in the above basically takes the problem to where the action exist, meaning that $$\mu_\lambda\big(\{t: |\emph{\emph{C}}_{2}(t)| <1 \text{ and } |\emph{\emph{Im}}_{2}(t)| <1 \}\big)$$ does not contribute much. The main contribution comes from $t$ such that $\ |\emph{\emph{C}}_{2}(t)| \geq 1 \text{ or } |\emph{\emph{Im}}_{2}(t)| \geq 1$.\\

The pair correlation conjecture suggest that $|\emph{\emph{C}}_{2}(t)|$ gets arbitrary large. Considering this, $1/(k+1)!$ in the RHS of \eqref{polycI} suggests a complicated cancellation inside the sum in \eqref{polycI}. Moreover since the expression involve higher moments of $\emph{\emph{C}}_{2}$ and $ \emph{\emph{ Im}}_{2}$ the main contribution shifts mainly from average size gaps to the zero clusters.  Explaining \eqref{polycI}, assuming PCC, can be a step forward in understating the problem in a better way.
\section{\bf Examples of Similar Random Variables} \label{sec-prob}
Motivated by our discussion in the last section and the exponential decay of moments in \eqref{polycI} we ask the following question: If pseudo moments of a complex random variable decrease very fast, would it imply that the random variable is bounded?\\

We present two random variables that are examples of negative answers to the above question.  An important property of the first one is that, our method on estimating pseudo moments does not reveal any useful information. In other words, it is an example of where our method fails.
Moments of the second  RV matches the moments of $\mathcal{Z}_2$ as in \eqref{polycI}. \\

Consider $$\mathcal{X}(t)= \sum_{n=1}^{\infty} a_n e^{int} + \tfrac{1}{2},$$
where $t \sim \mathcal{U}(0, 2\pi).$
Then $E[\mathcal{X}^k]= \tfrac{1}{2^k}.$ This example shows that not much can be said about $\mathcal{X},$ since the non-constant part that determines the behaviour of $\mathcal{X,}$ has no role in determining the psudo moments. 
In our case with $\mathcal{Z}_\alpha,$ we saw that the $k$-th pseudo moment decrease approximately like $1/k!.$ A RV like  $\mathcal{Z}_\alpha$ with this property is a bit more complicated. Here is one example:\footnote{This example is provided by Mateusz Wasilewski on mathoverflow. The exposition given here, is by Iosif Pinelis \cite{Iosif}.} \\

Let $Z:=XU$, where $X$ and $U$ are independent with $X>0$ almost surely. Assume that $X$ is unbounded with $E[X^k]< \infty$. Let $U=e^{iT}$ where $T$ is a RV with values in the interval $[0,2\pi)$ and its probability density function is given by 
\begin{equation}
\label{pdf}
    p(t)=\frac1{2\pi}\,\Big(1+2\sum_{n=1}^\infty a_n\cos nt\Big).
\end{equation}
We need to assume that $$2\sum_{n=1}^\infty a_n<1,$$ so that $p$ is well defined. Now for $k \in \mathbb{N}$ we have 
$$E[U^k]=E[e^{ikT}]=\int_0^{2\pi}e^{ikt}p(t) dt=a_k,$$
which by assuming $$0<a_k\sim\frac1{(k+1)!\,E[X]^k},$$ and by using the independence of $X, U$ we have that 
$$E[Z^k]\sim\frac1{(k+1)!},$$ as we desired. An important feature of this example is that the unboundedness of $Z$  comes from $X$ which is independent from $U.$ Therefore the event that $X$ attain large values should be independent of the event that $U$ gets large. \\ % This gets important in the application in the zeros of the zeta function.   \\ 

It is not clear how we can connect this example to our result on pseudo moments of $\mathcal{Z}_\alpha.$ To some extent it is true that when $t$ is close to an isolated zero,  $\mathcal{Z}_\alpha(t),$ behaves similarly to $e^{it},$ and $\mu_\lambda$ behaves in a similar way to $p(t).$ However, if $t$ is inside a zero cluster we must have $$ \mathcal{Z}_2 \sim X(t)e^{it,}$$ where $X$ can be viewed as accumulation factor. The problem with connecting this example to the moments of $\mathcal{Z}_\alpha$ is the independence condition. It not clear how $X(t)$ can be independent of $\mu_\lambda$ unless it behave like a constant.

\section{\bf Proof of the Theorem and its Corollaries}
In this section we are mainly seeking to estimate
\begin{align}
\int  \mathcal{Z}^{k}_\alpha(\tfrac{1}{2}+it) d\mu_{A} =\frac{ \notag \int \omega( \tfrac{1}{2}+it) \mathcal{Z}^{k}_\alpha(\tfrac{1}{2}+it)\Big|\sum_{n<T_0} \frac{a(n)}{n^{\tfrac{_1}{2}+it}}\Big|^2dt}{\int \omega( \tfrac{1}{2}+it)\Big|\sum_{n<T_0} \frac{a(n)}{n^{\tfrac{_1}{2}+it}}\Big|^2dt}
\end{align}
and 

\begin{align}
\int  |\mathcal{Z}_\alpha(\tfrac{1}{2}+it)|^2 d\mu_{A} = \frac{ \notag \int \omega( \tfrac{1}{2}+it) |\mathcal{Z}_\alpha(\tfrac{1}{2}+it)|^{2}\Big|\sum_{n<T_0} \frac{a(n)}{n^{\tfrac{_1}{2}+it}}\Big|^2dt}{\int \omega( \tfrac{1}{2}+it)\Big|\sum_{n<T_0} \frac{a(n)}{n^{\tfrac{_1}{2}+it}}\Big|^2dt},
\end{align}
where we set $T_0= T\log^{-2} T$ for abbreviation. Let us recall that 
\begin{equation*}\mathcal{Z}_\alpha(s):=\frac{-2}{\alpha \log T} \sum_{n<T^{\alpha}} \frac{\Lambda(n)}{n^{s}} \Big(1-\frac{\log n}{\alpha \log T}\Big),
\end{equation*} 

with \begin{equation*}
\mathcal{Z}_\alpha(\tfrac{1}{2} + it)= \emph{\emph{C}}_\alpha(t)+ i \emph{\emph{Im}}_\alpha(t).
\end{equation*}

Also the definition of the smooth weight is given by 
\begin{equation}
\label{omega-def}
\omega(\tfrac{1}{2}+it)=\frac{\log T}{\sqrt{\pi} T}\hspace{1 mm}\displaystyle{ e^{\frac{((t-T
)\log T)^2}{T^2}}},
\end{equation}
which basically is the Gaussian weight.  For the Fourier transform of $\omega$ we have 
\begin{equation}
\label{FTW0}
    \frac{1}{2 \pi i} \int_{(c)} \omega(s) x^s ds = \frac{1}{2 \pi} x^{\frac{1}{2}+iT} e^{-\frac{T^2 \log^2 x}{4 \log^ 2 T }}.
\end{equation}
We will use the above equation in the following form: Let $m$ or $n$ be smaller than $T \log^{-2}T,$ then 
\begin{equation}
\label{FTW}
     \int_{-\infty}^{\infty} \omega(\tfrac{1}{2}+it) \big(\frac{m}{n}\big)^{it} dt = \begin{cases} 1 \text{ if } m=n, \\
     0 \hspace{2 mm} \mbox{otherwise.}
     \end{cases}
\end{equation}

 We proceed with giving a proof of Theorem \ref{Thm}. Define $\lambda_{\beta_1, \beta_2}$ to be a completely multiplicative function with $$\lambda_{\beta_1, \beta_2}(p)=-1, \hspace{1 mm} \text{ for  }  T^{\beta_1}< p \leq T^{\beta_2}$$ and $\lambda_{\beta_1, \beta_2}(p)=1$ otherwise. Furthermore, let $\mu_{A_{\beta_1, \beta_2, r , \eta}}$ be the measure we build, as in \eqref{defmeasur}, using  
\begin{equation}\label{def_an}a(n)= \lambda_{\beta_1, \beta_2}(n)d_{r}(n) \big(1- \frac{\log n}{\log T}\big)^{\eta},\end{equation} and $d_r$ is the generalized divisor function. Note that we considered a more general definition for the arithmetic function in \eqref{def_an}. This obviously covers the arithmetic function in the statement of the theorem.

 \begin{proof}[Proof of Theorem \ref{Thm}]  We have that\begin{align*}\notag\mathcal{Z}^{k}_\alpha(\tfrac{1}{2}+it) = \Big(\frac{-2}{\alpha \log T}\Big)^k &\sum_{n_j<T^{\alpha}} \frac{\Lambda(n_1) \cdots \Lambda(n_k) }{(n_{1} \cdots n_{k})^{\tfrac{1}{2}+it}}\\ & \times \Big(1-\frac{\log n_1}{\alpha \log T}\Big) \cdots \Big(1-\frac{\log n_k}{\alpha \log T}\Big).\end{align*}
Therefore, by setting $  \hat{\Lambda}(n)= \Lambda(n) (1-\tfrac{\log n}{\alpha \log T})$ we get that
\begin{align}
\label{momz1}
   \int & \omega( \tfrac{1}{2}+it) \mathcal{Z}^{k}_\alpha(\tfrac{1}{2}+it)\Big|\sum_{n<T_0} \frac{a(n)}{n^{\tfrac{1}{2}+it}}\Big|^2dt  \\ \notag & =\Big(\frac{-2}{\alpha \log T}\Big)^k \sum_{\substack{n_j<T^{\alpha}\\ m, n<T_0}} \frac{\hat{\Lambda}(n_1) \cdots \hat{\Lambda}(n_k) a(m)a(n) }{\sqrt{mnn_{1} \cdots n_{k}}} \int \omega(\tfrac{1}{2}+ it) \big(\frac{m}{n n_1 \cdots n_k}\big)^{it}dt \\ \notag & =\Big(\frac{-2}{\alpha \log T}\Big)^k \sum_{\substack{n_j<T^{\alpha}\\ nn_{1} \cdots n_{k}<T_0}} \frac{\hat{\Lambda}(n_1) \cdots \hat{\Lambda}(n_k) a(n)a(n n_{1} \cdots n_{k}) }{nn_{1} \cdots n_{k}}.
\end{align}
This is obtained  using the fact that  $a(m)=0$ for $n>T_0$ and by applying \eqref{FTW} to the off-diagonal terms $m \neq n n_1 \cdots n_k$ in the expansion of the LHS of \eqref{momz1}. By inserting the definition of $a(\cdot)$ given in \eqref{def_an}, the above would simplify to 
%\begin{equation*}    \Big(\frac{-2}{\alpha \log T}\Big)^k \sum_{\substack{n_j<T^{\alpha}}} \frac{a( n_{1})\hat{\Lambda}(n_1) \cdots a( n_{k)}\hat{\Lambda}(n_k)  }{n_{1} \cdots n_{k}} \sum_{n< T_0/n_1\cdots n_k} \frac{|a(n)|^2}{n} + O(\tfrac{1}{\log T}). \end{equation*}
 %We assumed $a(n)= \lambda_{\beta_1, \beta_2}(n) d_r(n)$, therefore we have \eqref{momz1} equals to 
 \begin{equation}
 \label{6.6}
    \Big(\frac{-2r}{\alpha \log T}\Big)^k \sum_{\substack{n_j<T^{\alpha}}} \frac{\lambda_{\beta_1, \beta_2}( n_{1})\hat{\Lambda}(n_1) \cdots \lambda_{\beta_1, \beta_2}( n_{k)}\hat{\Lambda}(n_k)  }{n_{1} \cdots n_{k}}  \sum_{n <\tfrac{T_0}{n_1\cdots n_k}} \frac{|d_r(n)|^2}{n}W_{n_1, \cdots , n_k}(n), 
\end{equation}
with $$W_{n_1, \cdots , n_k}(n)= (1-\tfrac{\log n}{\log T})^\eta\big(1-\tfrac{\log n}{\log T}-\tfrac{\log n_1}{\log T} \cdots -\tfrac{\log n_k}{\log T}\big)^\eta.$$ We use the following estimate for $d_r(\cdot)$ from~\cite{BMN}: 
\begin{equation}
\label{eqdr}
\sum_{m< x} \frac{d^2_{r}(m)}{m} = A_r(\log x)^{r^2} + O\big( (\log T)^{r^2-1}\big),
\end{equation} Therefore, by Stieltjes integration, the innermost sum equals to 
$$\int_{1}^{T_0/n_1 \cdots n_k} \frac{A_r r^2 (\log x)^{r^2-1}}{x}W_{n_1, \cdots , n_k}(x)dx+ O(\log^{r^2-1}T).$$ Using the prime number theorem we have the \eqref{6.6} equals to 
$$\int_{\substack{x_i< T^\alpha \\ x_1 \cdots x_k < T}} \tfrac{\lambda_{\beta_1, \beta_2}( x_{1})(1-\tfrac{\log x_1}{\alpha \log T}) \cdots \lambda_{\beta_1, \beta_2}(x)(1-\tfrac{\log x_k}{\alpha \log T})}{x_1 \cdots x_k}\int_{1}^{T_0/x_1 \cdots x_k} \tfrac{(\log x)^{r^2-1}W_{x_1, \cdots , x_k}(x)}{x}dx,$$ times $A_r r^2 \Big(\tfrac{-2r}{\alpha \log T}\Big)^k$ plus a negligible error term.
We use variables changes  $u=\tfrac{\log x}{\log T}$ and $v_i=\tfrac{\log x_i}{\log T}$ and we divide the whole thing by 
 \begin{align} \notag 
   \int & \omega(\tfrac{_1}{2} +it) \Big|  \sum_{} \frac{a(n)}{n^{\tfrac{_1}{2} +it}}\Big|^2dt= \sum_{n< T \log^{-2} T} \frac{|a(n)|^2}{n} \\ & \label{wholemess}= A_r r^2 \log^{r^2} T \int_{0}^{1} v^{r^2-1}(1-v)^{2\eta}dv +O\big( (\log T)^{r^2-1}\big).
 \end{align}
 Therefore we have that $\int  \mathcal{Z}^{k}_\alpha(\tfrac{1}{2}+it) d\mu_{A}$ equals to
 \begin{align*}
   \big(\frac{-2r}{\alpha}\big)^k  \frac{\int_{0}^{1} \int_{0}^{1-v_1}\cdots \int_{0}^{\small 1- v_2  \cdots- v_k }  Q(v_1, \cdots, v_k) u^{r^2-1} W_{v_1, \cdots , v_k}(u) du \hspace{1 mm} \small{d_{v_1} \ldots d_{v_k}} }{\int_{0}^{1} v^{r^2-1}(1-v)^{2\eta}dv},
 \end{align*} 
plus an error term of order $\log^{-1} T.$ Note that we take  $$Q(v_1, \cdots, v_k)= \lambda_{\beta_1, \beta_2}( v_{1})(1-\tfrac{v_1}{\alpha}) \cdots \lambda_{\beta_1, \beta_2}( v_{k})(1-\tfrac{v_k}{\alpha}).$$ 
 
 Now it is easy to see that \eqref{Z-mom} is a special case (with $\eta=0$ which is equivalent to  $W_{v_1, \cdots , v_k}(u)=1$) of what we showed in the above.\\
 
For $\alpha \geq 1$ and simple cases of $a(n)= \lambda(n), 1, $ or $ \lambda_2(n)$ we can get a much cleaner estimate than \eqref{Z-mom}:\\

If $a(n)= \lambda(n)$ we have that $a(n)a(np_1\cdots p_k)$ in \eqref{momz1} is $(-1)^k.$ 

If $a(n)= \lambda_2(n)$ and for even moments ($\mathcal{Z}_{\alpha}^{2k}$), we use
\begin{equation}
\label{lambda21}
    \lambda_2(n) \lambda_2(np_1 \cdots p_{2k})= (-1)^{k}.
\end{equation}  For $a(n)= \lambda_2(n)$ and odd moments we divide into two cases and we get
\begin{equation}
\label{lambda22}
\lambda_2(n) \lambda_2(np_1 \cdots p_k)=  \begin{cases} \lambda(n) &\mbox{if } k= 4r+1 \\
-\lambda(n). & \mbox{if } k=4r+3 \end{cases}\end{equation}

If we insert \eqref{lambda21} into \eqref{momz1} we get: 
\begin{align}
\label{eq-high-moments} 
(-1)^{k}\sum_{n_1\cdots n_{2k}<T^{\alpha}} \tfrac{\Lambda(n_1) \cdots \Lambda(n_{2k}) }{n_{1} \cdots n_{2k}}\big(1-\tfrac{\log n_1}{\alpha \log T}\big) \cdots \big(1-\tfrac{\log n_{2k}}{\alpha \log T}\big)\log\big(\tfrac{T_0}{n_1 \cdots n_{2k}}\big),
\end{align}
which justify $(-1)^k$ in \eqref{thmlambda2} in the statement of the theorem. \\

For $a(n)=\lambda(n)$ we end up with 
\begin{align}
\label{eq-high-moments1} (-1)^k\sum_{n_1\cdots n_k<T^{\alpha}} \tfrac{\Lambda(n_1) \cdots \Lambda(n_{k}) }{n_{1} \cdots n_{k}}\big(1-\tfrac{\log n_1}{\alpha \log T}\big) \cdots \big(1-\tfrac{\log n_{k}}{\alpha \log T}\big)\log\big(\tfrac{T_0}{n_1 \cdots n_{k}}\big).
\end{align}
 
Therefore, we will proceed to estimate \eqref{eq-high-moments1} since it covers both cases. Using  Perron's formula we have \begin{align*}\log \frac{T}{n_1 \cdots n_k} \mathds{1}_{n_1 \cdots n_k<T} = \frac{1}{2\pi i } \int_{(c)} \frac{T^s}{(n_1 \cdots n_k)^s} \frac{ds}{s^2}.\end{align*}Hence \eqref{eq-high-moments} comes to\begin{align*}\frac{1}{2\pi i } \int_{(c)} \sum_{n_j} & \frac{\Lambda(n_1) \cdots \Lambda(n_k) }{(n_{1} \cdots n_{k})^s}\Big(1-\tfrac{\log n_1}{\alpha \log T}\Big) \cdots \Big(1-\tfrac{\log n_k}{\alpha \log T}\Big) \frac{ds}{s^2}\\ & = \frac{1}{2\pi i } \int_{(c)} T^s \Big(\sum_{} \frac{\Lambda(n)}{n^{s+1}}- \frac{1}{\alpha \log T}\frac{\Lambda(n)\log n}{n^{s+1} } \Big)^k\frac{ds}{s^2} = \\ & \frac{1}{2\pi i } \int_{(c)} T^s \Big(-\frac{\zeta^\prime}{\zeta}(s+1) + \frac{1}{\alpha \log T} \Big(\big(\frac{\zeta^\prime}{\zeta}(s+1)\big)^{\prime}\Big)^k\frac{ds}{s^2}.\end{align*}We move the line of integration to $c= 1-\log^{-3/4}T$ and use the zero free region of the zeta function. We just need to consider the residue at $s=0$ in the above. We end up with\begin{align*}\frac{1}{2 \pi i} & \int T^s \Big(\frac{1}{s} - \frac{1}{\alpha \log T s^2}\Big)^k \frac{ds}{s^2}\\ & = \frac{1}{2 \pi i} \int T^s \sum_{i=0}^{k} {k \choose i }\big(\frac{-1}{\alpha \log T}\big)^{i} \frac{1}{s^{k+i+2}} ds \\ & = \frac{1}{2 \pi i} \sum_{i=0}^{k} {k \choose i }\big(\frac{-1}{\alpha \log T}\big)^{i} \int \frac{T^s}{s^{k+i+2}} \\ & = (\log T)^{k+1} \sum_{i=0}^{k} {k \choose i }\big(\frac{1}{\alpha }\big)^{i} \frac{(-1)^{i}}{(k+i+1)!}.
\end{align*}
Therefore we have
\begin{align*}
\int \omega( \tfrac{1}{2}+it) \mathcal{Z}^{k}_\alpha(\tfrac{1}{2}+it)\Big|\sum_{n<T_0} \frac{\lambda(n)}{n^{\tfrac{1}{2}+it}}\Big|^2dt= \Big(\frac{2}{\alpha}\Big)^k \sum_{i=0}^{k} {k \choose i} \frac{(-1)^{i}}{\alpha^i} \frac{\log T}{(k+i+1)!}.
\end{align*}
Finally we divide by $\log T$, to get to $d\mu_\lambda.$ For $\lambda_2$ we have the same calculation, with only difference that we must considered it for the $2k$-moment. This finishes the proof.
\end{proof}
Next we give a corollary of the theorem that we will use later.
\begin{corollary} We have that
\label{Cor2}
\begin{align}
 & \label{coreq} \int \Big({\emph{C}}_1^2(t) - \textrm{ \emph{Im}}_1^2(t)\Big) d\mu_\lambda= 0.36666\cdots + O\big(\tfrac{1}{\log T}\big), \\ & \label{core1} \int \Big({\emph{C}}_1^3(t) - 3 {\emph{C}}_1(t)\textrm{  \emph{Im}}^2(t)\Big) d\mu_\lambda= 0.16504 \cdots + O\big(\tfrac{1}{\log T}\big), \\ &  \label{coreq2} \int \Big({\emph{C}}_1^4(t) - 6 {\emph{C}}_1^2(t)\textrm{  \emph{Im}}^2(t)+ \textrm{  \emph{Im}}^4(t)\Big) d\mu_\lambda= 0.06194\cdots + O\big(\tfrac{1}{\log T}\big).
\end{align}
\end{corollary}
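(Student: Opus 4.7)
The plan is to deduce the three identities directly from the $k$-th pseudo moment formula \eqref{high-mom} of Theorem \ref{Thm} in the case $\alpha=1$, by extracting real parts. Since $\mathcal{Z}_1(\tfrac12+it)=\emph{C}_1(t)+i\,\emph{Im}_1(t)$, the binomial expansion gives
\begin{equation*}
\Re\bigl(\mathcal{Z}_1^{k}(\tfrac12+it)\bigr)=\sum_{j=0}^{\lfloor k/2\rfloor}(-1)^{j}\binom{k}{2j}\emph{C}_1^{\,k-2j}(t)\,\emph{Im}_1^{\,2j}(t),
\end{equation*}
and for $k=2,3,4$ the right-hand side is exactly the integrand appearing in \eqref{coreq}, \eqref{core1}, and \eqref{coreq2} respectively. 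Because the right-hand side of \eqref{high-mom} is manifestly real, we have $\int\mathcal{Z}_1^{k}\,d\mu_\lambda=\int\Re(\mathcal{Z}_1^{k})\,d\mu_\lambda+O(1/\log T)$, so each identity reduces to evaluating \eqref{high-mom} at $\alpha=1$ and the appropriate $k$.

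Next I would plug $\alpha=1$ into \eqref{high-mom}, which gives
\begin{equation*}
\int\mathcal{Z}_1^{k}\,d\mu_\lambda=2^{k}\sum_{i=0}^{k}\binom{k}{i}\frac{(-1)^{i}}{(k+i+1)!}+O\!\left(\tfrac{1}{\log T}\right),
\end{equation*}
and compute the resulting rational numbers. For $k=2$ one finds $4\bigl(\tfrac{1}{6}-\tfrac{2}{24}+\tfrac{1}{120}\bigr)=\tfrac{11}{30}=0.36666\ldots$, which yields \eqref{coreq}. For $k=3$ one finds $8\bigl(\tfrac{1}{24}-\tfrac{3}{120}+\tfrac{3}{720}-\tfrac{1}{5040}\bigr)=\tfrac{52}{315}=0.16508\ldots$, which yields \eqref{core1}. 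For $k=4$ one finds $16\bigl(\tfrac{1}{120}-\tfrac{4}{720}+\tfrac{6}{5040}-\tfrac{4}{40320}+\tfrac{1}{362880}\bigr)=\tfrac{281}{4536}=0.06194\ldots$, which yields \eqref{coreq2}.

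There is no real obstacle here: the substantive analytic content (Perron's formula, residue calculus, zero-free region of $\zeta$) is already packaged inside the proof of Theorem \ref{Thm}, and the corollary is purely a matter of identifying $\Re(\mathcal{Z}_1^{k})$ via the binomial theorem and performing the elementary arithmetic with common denominator $(2k+1)!$. The only minor subtlety worth flagging is that the identification $\int\mathcal{Z}_1^{k}\,d\mu_\lambda=\int\Re(\mathcal{Z}_1^{k})\,d\mu_\lambda$ uses the (non-obvious but verifiable from the proof of Theorem \ref{Thm}) fact that the diagonal expansion in \eqref{eq-high-moments1} is a real sum, so the imaginary parts $\sum\binom{k}{2j+1}(-1)^{j}\emph{C}_1^{k-2j-1}\emph{Im}_1^{\,2j+1}$ integrate to $O(1/\log T)$ automatically.
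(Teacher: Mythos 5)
Your proof is correct and follows the paper's approach exactly: you identify $\Re(\mathcal{Z}_1^k)$ via the binomial expansion in $C_1$ and $\text{Im}_1$, observe that the right-hand side of \eqref{high-mom} is real so only the real part survives up to $O(1/\log T)$, and then specialize \eqref{high-mom} to $\alpha=1$ and $k=2,3,4$, which is precisely the paper's proof. One point worth flagging from your arithmetic: your computed value for $k=3$ is $52/315 = 0.16508\cdots$ rather than the $0.16504\cdots$ printed in \eqref{core1}, and a common-denominator check, $8\bigl(\tfrac{210-126+21-1}{5040}\bigr) = 8\cdot\tfrac{104}{5040} = \tfrac{52}{315}$, confirms your number, so the paper appears to contain a small numerical typo at that line.
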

\begin{proof}
We have that$$\Re (\mathcal{Z}^2_\alpha(\tfrac{1}{2}+it))= C_\alpha(t)^2-\emph{\emph{Im}}^2_\alpha(t),$$and $$\Re (\mathcal{Z}^3_\alpha(\tfrac{1}{2}+it))= C_\alpha(t)^3 -3 C_\alpha(t) \text{Im}^2_\alpha(t),$$
also a similar equation for the real part of the $4$-th moment $$\Re (\mathcal{Z}^4_\alpha(\tfrac{1}{2}+it))= C_\alpha(t)^4 -6 C^2_\alpha(t)\text{Im}^2_\alpha(t) + \text{Im}^4_\alpha(t).$$  We consider the above equations and then we apply Theorem \ref{Thm} to get the corollary.\end{proof}

We continue with giving a proof of Corollaries \ref{cor 01} and \ref{cor 1}.  Using \eqref{Z-def} we have  $$\emph{\emph{C}}_{\alpha}(t) = \Re \mathcal{Z}_\alpha =\frac{-2\Re}{\alpha \log T} \sum_{n<T^{\alpha}} \frac{\Lambda(n)}{n^{\tfrac{_1}{2}+ it}} \Big(1-\frac{\log n}{\alpha \log T}\Big).  $$
Therefore,
\begin{equation}
\label{3.11}
    \int \emph{\emph{C}}^{2}_\alpha(t) d\mu_\lambda = \int \frac{ (\mathcal{Z}_\alpha+ \overline{\mathcal{Z}}_\alpha)^{2}}{4} d\mu_\lambda = \int  \frac{\mathcal{Z}^{2}_\alpha}{4} + \frac{\overline{\mathcal{Z}^{2}_\alpha}}{4} + \frac{|\mathcal{Z}_\alpha|^2}{2} d\mu_\lambda.
\end{equation}
Note that\begin{equation}
\label{appthm1}
    \int \frac{\mathcal{Z}^{2}_\alpha}{4} + \frac{\overline{\mathcal{Z}^{2}_\alpha}}{4} =  \int \frac{\emph{\emph{C}}_1(t)^2 - \textrm{ \emph{\emph{Im}}}_1^2(t)}{2} d\mu_\lambda= 0.1833333\cdots,
\end{equation}
using Corollary \ref{Cor2}. For estimating
\begin{equation*}
    \int |\mathcal{Z}_\alpha|^2 d\mu_A, 
\end{equation*}
we use the following lemma.
\begin{lemma}
\label{lemmsquare} Let $a(n)= \lambda(n)d_{r}(n) \big(1- \frac{\log n}{\log T}\big)^{\eta},$
and set 
\begin{equation}
\label{sumbm}
\mathcal{Z}_\alpha(s) \big(\displaystyle{ \sum_{n < T\log^{-2}T}} a(n)n^{-s}\big)= \sum_{m<T^{\alpha+1}} \frac{b(m)}{m^{s}}
\end{equation}
Then we have 
\begin{align}
\label{5.15sq}
 \int |\mathcal{Z}_\alpha(& \tfrac{1}{2}+it)|^2 d\mu_{A_{r, \eta }}=  O\big(\frac{1}{\log T}\big ) \notag \\ & + \frac{\sum_{m< T\log^{-2}T} \frac{|b(m)|^2}{m}+  \int \omega(\tfrac{1}{2} + it)\bigg|\displaystyle{\sum_{{T}{\small{\log^{-2}T}} \leq n}} \frac{b(n)}{n^{\tfrac{_1}{2}  + it}}\bigg|^2 }{A_r \log^{r^2} T \int_{0}^{1} v^{r^2-1}(1-v)^{2\eta}dv}.
\end{align}
Moreover, we have
\begin{align}
 \sum_{m< T\log^{-2}T} & \frac{|b(m)|^2}{m}    = \label{lemsq}
     A_r\frac{4r^2}{\alpha^2}  \log^{r^2} T  {\int_{0}^{1}\int_{0}^{1-v} v Q(u, v, v)du dv} \\ & + \notag A_r\frac{4r^4}{\alpha^2}  \log^{r^2} T 
   {\displaystyle{\int_{0}^{1}} Q(u, v_1, v_2)du dv_2dv_1 } + O\big(\log^{r^2-1} T\big),  
\end{align}
with 
\begin{equation*}
    Q(u, v_1, v_2) = u^{r^2-1} \big(1- \frac{v_1}{\alpha}\big)\small{\big(1- \frac{v_2}{\alpha}\big)} \small{(1-u -v_1)^\eta (1-u -v_2)^\eta}.
\end{equation*}
\end{lemma}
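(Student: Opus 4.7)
The plan is to split Lemma \ref{lemmsquare} into two essentially independent parts: the orthogonality identity \eqref{5.15sq} and the asymptotic evaluation \eqref{lemsq}, and handle each in turn.

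For \eqref{5.15sq}, I would expand $|B(\tfrac12+it)|^2$ with $B(s)=\sum_m b(m)m^{-s}$ as in \eqref{sumbm} into a double Dirichlet sum with oscillatory factor $(n/m)^{it}$ inside the $\omega$-integral. The orthogonality relation \eqref{FTW} forces the inner integral to vanish whenever $m\ne n$ and at least one of $m,n$ is below $T\log^{-2}T$, and to equal $1$ when $m=n<T\log^{-2}T$. Consequently the only surviving contributions are the diagonal $\sum_{m<T\log^{-2}T}|b(m)|^2/m$ together with the pure-tail term $\int\omega|B_{\ge T\log^{-2}T}|^2$. Dividing by $\int\omega|A|^2$, already evaluated in \eqref{wholemess}, delivers \eqref{5.15sq}, the $O(1/\log T)$ error coming from the relative lower-order term in \eqref{wholemess}.

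For \eqref{lemsq}, first expand $b(m)=-\tfrac{2}{\alpha\log T}\sum_{kn=m}\hat\Lambda(k)\,a(n)$ and square, producing a quadruple sum over $(k_1,n_1,k_2,n_2)$ with $k_1n_1=k_2n_2=m$. The contributions where $k_1$ or $k_2$ is a proper prime power are easily seen to be $O(\log^{r^2-1}T)$ and can be absorbed into the stated error, leaving $k_i=p_i$ prime. Two cases remain: the diagonal $p_1=p_2=p$, which forces $n_1=n_2=n$; and the off-diagonal $p_1\ne p_2$, which forces $p_2\mid n_1$ and $p_1\mid n_2$, so one may write $n_1=p_2\ell$, $n_2=p_1\ell$.

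For the diagonal, the contribution equals
$$\frac{4}{\alpha^2\log^2T}\sum_p\frac{\Lambda(p)^2\bigl(1-\tfrac{\log p}{\alpha\log T}\bigr)^2}{p}\sum_{n<T\log^{-2}T/p}\frac{d_r(n)^2\bigl(1-\tfrac{\log n}{\log T}\bigr)^{2\eta}}{n}.$$
Partial summation against $\vartheta(x)=x+o(x)$ in the variable $v=\log p/\log T$ converts the outer prime sum to $\log^2 T$ times an integral in $v$ (the extra factor of $v$ arising from $\sum_{p\le y}(\log p)^2/p\sim\tfrac12\log^2 y$), and \eqref{eqdr} with $u=\log n/\log T$ turns the inner sum into $A_r r^2\log^{r^2}T\int_0^{1-v}u^{r^2-1}(\cdots)\,du+O(\log^{r^2-1}T)$; collecting factors yields the first summand of \eqref{lemsq} with its $4r^2/\alpha^2$ normalization. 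For the off-diagonal, the Liouville factors telescope, $\lambda(p_2\ell)\lambda(p_1\ell)=\lambda(p_1)\lambda(p_2)\lambda(\ell)^2=1$, and, outside the negligible sub-case $p_1\mid\ell$ or $p_2\mid\ell$, the divisor identity $d_r(p_2\ell)d_r(p_1\ell)=r^2 d_r(\ell)^2$ holds. The weight factors cleanly as $(1-\log(p_2\ell)/\log T)^\eta(1-\log(p_1\ell)/\log T)^\eta$, matching the $(1-u-v_1)^\eta(1-u-v_2)^\eta$ in $Q$. Two partial-summation steps (one per prime) convert the two prime sums into plain integrals $\int dv_1$ and $\int dv_2$ (no extra $v_i$ factor, since each $\Lambda(p_i)$ appears only once), while \eqref{eqdr} again contributes $A_r r^2\log^{r^2}T$ from the $\ell$-sum; combined with the $r^2$ from the divisor identity this gives the $r^4$ in the second summand of \eqref{lemsq}.

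The main obstacle will be the combinatorial bookkeeping in the off-diagonal case: excising the coincidences $p_1=p_2$ and the arithmetic restrictions $p_1\mid\ell$ or $p_2\mid\ell$ cleanly so that only admissible $O(\log^{r^2-1}T)$ errors arise, and controlling the proper prime-power and partial-summation remainders to the same accuracy. Once these technicalities are in hand, summing the two cases delivers \eqref{lemsq}.
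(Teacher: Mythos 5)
Your proposal is correct and follows essentially the same route as the paper: split $\sum b(m)m^{-s}$ at $T\log^{-2}T$, use the Gaussian orthogonality of \eqref{FTW0}--\eqref{FTW} to isolate the diagonal below the split plus the pure-tail integral, then evaluate $\sum_{m<T_0}|b(m)|^2/m$ by discarding proper prime powers, separating the diagonal $p_1=p_2$ from the off-diagonal $p_1\neq p_2$, and applying the prime number theorem together with \eqref{eqdr}. The only cosmetic difference is that you square the Dirichlet convolution for $b$ directly into a quadruple sum, whereas the paper first consolidates $b(m)$ into the form $-\tfrac{2}{\alpha\log T}\,d_r(m)\lambda(m)\sum_{p^h\Vert m}s_h(p)\log p$ before squaring; both reduce to the same integrals.
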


\begin{proof}
First note that the denominator in \eqref{5.15sq} comes from the fact that we considered $ d\mu_{A_{ r, \eta }}$ to be a probability measure, hence we need to divide by $\sum |a(n)|^2/n$, which using \eqref{wholemess} is $$ A_r \log^{r^2} T \int_{0}^{1} v^{r^2-1}(1-v)^{2\eta}dv + O(\log^{r^2-1} T ).$$ For the sum with $b(n)$ in \eqref{sumbm}, we split the sum
\begin{align*}
     \sum_{m<T^{\alpha+1}} \frac{b(n)}{n^{s}}=  \sum_{m<T \log^{-2}T} \frac{b(m)}{m^{s}} +  \sum_{T \log^{-2}T \leq n<T^{\alpha+1}} \frac{b(n)}{n^{s}}.
\end{align*}
Again, for simplicity we set $T_0= T\log^{-2} T.$ Hence

\begin{align*}
  &  \int  \omega(\tfrac{1}{2}  + it) \bigg|\sum_{m<T^{\alpha+1}} \frac{b(m)}{m^{\tfrac{1}{2} + it}}\bigg|^2=   \int \omega(\tfrac{1}{2} + it)  \bigg|\sum_{m<T_0} \frac{b(m)}{m^{\tfrac{1}{2}  + it}}\bigg|^2   \\ & +  \int \omega(\tfrac{1}{2} + it)\bigg|\sum_{T_0 \leq n<T^{\alpha+1}} \frac{b(n)}{n^{\tfrac{1}{2}  + it}}\bigg|^2 + 2 \Re \sum_{\substack{m<T_0 \\ T^{\alpha} > n \geq T_0 }} \frac{b(n)b(m)}{\sqrt{mn}} \int \omega(\tfrac{1}{2} + it) \big(\frac{m}{n}\big)^{it}
\end{align*}
The last integral using the Fourier transform of the Gaussian in \eqref{FTW0} is \begin{equation}
\label{gaussian1}
    \ll  e^{- \frac{T^2 \log^2 (m/n)}{4\log^2 T}}.
    \end{equation}
Using the fact that $m<T_0$  and  $n \geq T_0$ we have $$|\log \big( \frac{m}{n} \big)|= |\log \big(1+ \frac{n-m}{m}\big)| > \frac{1}{m}> \frac{\log^2 T}{T}.$$
Therefore \eqref{gaussian1} is smaller than  $e^{- \log^2 T},$ and therefore, using $b(m)< m^{\epsilon},$ we get   $$ \frac{b(n)b(m)}{\sqrt{mn}} \int \omega(\tfrac{1}{2} + it) \big(\frac{m}{n}\big)^{it} \ll  T^{\alpha/2 + \epsilon} e^{- \log^2 T} \ll T^{-B}, $$ for every $B>0.$ Therefore we established that 
\begin{equation*}
   \int |\mathcal{Z}_\alpha|^2 d\mu_{A_{ r, \eta }} = \frac{\sum_{n< T_0} \frac{|b(m)|^2}{m} + \int \omega(\tfrac{1}{2} + it)\bigg|\sum_{T_0 \leq \hspace{1 mm} n <T^{\alpha+1}} \frac{b(n)}{n^{\tfrac{_1}{2}  + it}}\bigg|^2dt}{ \sum_{n < T_0} \frac{|a(n)|^2}{n}} + O(\tfrac{1}{T^B}),
\end{equation*}
which justifies \eqref{5.15sq}. To show \eqref{lemsq}, for $n<T_0,$ we have
\begin{align}
    b(n)= \notag & - \frac{2}{\alpha \log T} \sum_{d|n}\Lambda(d)\big(1-\frac{\log(d)}{\alpha \log T}\big) \lambda(n/d) d_r(n/d) \big(1 -\frac{\log n}{\log T} +\frac{\log d}{\log T} \big)^\eta \\ & = -\frac{2}{\alpha \log T} d_r(n)\lambda(n) \sum_{p^h||n} s_h(p) \log p,
\end{align}
where $p^h || n$ means that $p^h|n$ and $p^{h+1} \nmid n$ and  $$s_h(p)= \sum_{j=1}^{h}(-1)^j\Big(\frac{\Gamma(h-j+r)h!}{\Gamma(h+r)(h-i)!}\Big) \big(1- \frac{j\log p}{\alpha \log T}\big)\big(1- \frac{\log n}{ \log T}+ \frac{j\log p}{ \log T}\big)^\eta.$$
We used the following formula 
\begin{equation*}
d_r(p^k)= \frac{\Gamma(k+r)}{\Gamma(r)k!},
\end{equation*}
in the above. Consequently, we have
\begin{align}
\label{3.16}
    & \notag \sum_{n< T_0} \frac{|b(n)|^2}{n}= \frac{4}{(\alpha \log T)^2} \sum_{n< T_0}  \frac{\big(d_r(n)\sum_{p^h||n} s_h(p) \log p\big)^2}{n}  \\  & = \frac{4}{(\alpha \log T)^2} \sum_{n< T_0} \frac{d^2_r(n)\sum_{p^h||n} s_h(p) \log p\sum_{q^{h^{\prime}}||n} s_{h'}(q) \log q}{n}.
\end{align}
We show that the contribution of $h> 1$ or $h^{\prime} >1$ is negligible. We have that \eqref{3.16} is smaller than 
$$ \frac{1}{(\alpha \log T)^2} \sum_{p^h, q^h{^{\prime}} < T_0} \frac{\log p  \log q}{r^2p^h q^{h^{\prime}}} \sum_{\substack{ p \nmid n , q \nmid n \\ n< T/(p^r q^r{^{\prime}})} } \frac{d^2_r(n)}{n}$$
We use \eqref{eqdr} and we obtain that the above is $$ \ll \log^{r^2-1} T,$$ if either of $h, h'$ are $>1.$ Therefore considering the fact that the total mass of $ \mu_{A_{r, \eta }}$ in the denominator in \eqref{lemsq} is of size $\log^{r^2} T$, proves the contribution of $h$ or $h'$ bigger than $1$ is negligible.  Hence,  we continue with $h=h^{\prime}=1.$ We set $n=mpq$  in \eqref{3.16}, when $p \neq q$,  and  by changing the order of the summation we are seeking to estimate 
\begin{align}\notag
\frac{4}{(\alpha \log T)^2} & \sum_{p, q <T_0} \frac{ \log p \big(1- \tfrac{\log p}{\alpha \log T} \big)  \log q \big(1- \tfrac{\log q}{\alpha \log T}\big)}{r^2pq} \\ & \times \sum_{\substack{ p \nmid m , q \nmid m \\ m< T/p q}} \frac{d^2_r(mpq)\big(1- \frac{\log m}{ \log T}+ \frac{\log p}{ \log T}\big)^\eta \big(1- \frac{\log m}{ \log T}- \frac{\log q}{ \log T}\big)^\eta}{m}.    
\end{align} 
Therefore, for $p \neq q,$ we have
\begin{align}
\label{cacl1}
  \frac{4r^2}{(\alpha \log T)^2} \sum_{pq < T_0} & \frac{ \log p \big(1- \tfrac{\log p}{\alpha \log T} \big)  \log q \big(1- \tfrac{\log q}{\alpha \log T} \big)}{pq}   \\ & \times  \sum_{\substack{ p \nmid m , q \nmid m \\ m< T/pq }}\notag\frac{d^2_r(m) \big(1- \frac{\log m}{ \log T}- \frac{\log p}{ \log T}\big)^\eta \big(1- \frac{\log m}{ \log T}- \frac{\log q}{ \log T}\big)^\eta}{m}.
\end{align}
For $p=q,$ we set $m=np$ in \eqref{3.16} and we get  
\begin{equation}
\label{cacl2}
  \frac{4}{(\alpha \log T)^2} \sum_{p < T_0}  \frac{ \log^2 p \big(1- \tfrac{\log p}{\alpha \log T} \big)^2  }{p}  \sum_{\substack{ p \nmid m  \\ m< T/p }} \frac{d^2_r(m)}{m} \big(1- \frac{\log m}{ \log T}\big)^{2\eta}.
 \end{equation}
 
Conditions $p \nmid m$ and $p \nmid m, q \nmid m$ in \eqref{cacl1}
 and \eqref{cacl2} can be dropped with a negligible error term, since we have $$\sum_{m < T, \hspace{1 mm} p|n } \frac{1}{m}= \frac{1}{p}\log(T/p) + O(1/T). $$
Applying the prime number theorem  and \eqref{eqdr} imply that \eqref{cacl1} equals to 
\begin{equation}
\label{3.20}
A_r\frac{4r^4}{\alpha^2}  \log^{r^2} T   \int_{0}^{1}  \int_{0}^{1-v_1}\int_{0}^{1-v_1-v_2} Q(u, v_1, v_2) dudv_1 dv_2 + O(\log^{r^2-1} T),
\end{equation}
Recall that $$Q(u, v_1, v_2) = u^{r^2-1} \big(1- \frac{v_1}{\alpha}\big)\small{\big(1- \frac{v_2}{\alpha}\big)} \small{(1-u -v_1)^\eta (1-u -v_2)^\eta}.$$ Similarly,  \eqref{cacl2} equals to 
\begin{equation*}
   A_r\frac{4r^2}{\alpha^2} \log^{r^2} T \int_{0}^{1}\int_{0}^{1-v} u^{r^2-1}  \small{(1-u -v)^{2\eta}v \big(1- \frac{v}{\alpha}\big)^2 } dudv + O(\log^{r^2-1} T).
\end{equation*}
This completes the proof of Lemma \ref{lemmsquare}.
\end{proof}
\begin{proof}[Proof of Corollaries \ref{cor 01} and \ref{cor 1}]
 By Lemma \ref{lemmsquare}  for $r=1$ and $\eta=0,$ we have that
\begin{align}
\label{cor14}
      \int |\mathcal{Z}_\alpha|^2 d\mu_\lambda \geq &  4\alpha^{-2}  \int_{0}^{1} \int_{0}^{1-u} (1-u -v)\big(1- \frac{u}{\alpha}\big)\big(1- \frac{v}{\alpha}\big)dv du   \\ & \notag + 4\alpha^{-2} \int_{0}^{1} u(1-u)\big(1- \frac{u}{\alpha}\big)^2 du  + O(\log^{-1} T) \\ & = \frac{4}{3\alpha^2} - \frac{1}{\alpha^3} + \frac{7}{30\alpha^4}.
\end{align}
For $\alpha=1,$ we get 
\begin{equation}
\label{3.23}
    \int |\mathcal{Z}_1|^2 d\mu_\lambda \geq 0.56664.
\end{equation}
Recall that by \eqref{3.11} we have $$\int \emph{\emph{C}}^{2}_1(t) d\mu_\lambda =  \int  \Big(\frac{\mathcal{Z}^{2}_1}{4} + \frac{\overline{\mathcal{Z}^{2}_1}}{4} + \frac{|\mathcal{Z}_1|^2}{2}\Big) d\mu_\lambda.$$
By combining \eqref{appthm1} and \eqref{3.23} we have 

\begin{equation}
\label{VarC1}
    \int \emph{\emph{C}}^{2}_1(t) d\mu_\lambda  \geq 0.46666.
\end{equation}
Using $$\int \Big(\emph{\emph{C}}_1(t)^2 -  \emph{\emph{Im}}_1^2(t)\Big) d\mu_\lambda= 0.36666\cdots,$$  we get a lower bound for $ \emph{\emph{Im}}_1^2.$
%Now to see the bound we get, from the lower bound on the mean of $\emph{\emph{C}}^2_1,$ 
For the $4$-th moment note that 
 
$$\int \big( \emph{\emph{C}}_1^{2}(t)- 0.46666)^2  d\mu_\lambda(t)> 0,$$ which gives \begin{equation}
\label{three}    
\int \emph{\emph{C}}_1^{4}(t) d\mu_\lambda(t)  >2\times 0.46666\int  \emph{\emph{C}}_1^{2}(t)  d\mu_\lambda(t)  - (0.46666)^2 > (0.46666)^2.
\end{equation}
Similarly we can get a lower bound for the expectation of $\emph{\emph{Im}}_1^{4}.$\\

For proving \eqref{two} we use \eqref{three} and the lower bound we get for the $4$-th power of  $\emph{\emph{Im}}_1,$ and insert them in $$\int \Big(\emph{\emph{C}}_1^4(t) - 6 \emph{\emph{C}}_1^2(t)\emph{  \emph{Im}}_1^2(t)+ \emph{  \emph{Im}}_1^4(t)\Big) d\mu_\lambda= 0.06194\cdots + O\big(\tfrac{1}{\log T}\big),$$ which we obtained in Corollary \ref{Cor2}. This finishes the proof. 
\end{proof}
\begin{proof}[Proof of Corollary \ref{CorMain}] For $r=2$ and $\eta=0,$ using the theorem we have that 
\begin{equation}
\label{5.32}
    \int  \big(\emph{\emph{C}}^2_1(t)-  \emph{\emph{Im}}^2_1(t)\big) d\mu_{A_{0, 1, 2}}= 0.39047\cdots,
\end{equation} 
where $\mu_{A_{0, 1, 2}}$ correspond to the measure we have with $a(n)=\lambda(n)d_2(n).$ On the other hand we have that 
$$\int  \emph{\emph{C}}_1(t) d\mu_{A_{0, 1, 2}}= 2/3,$$ which gives that $$\int \emph{\emph{C}}^2_1(t) d\mu_{A_{0, 1, 2}} > 4/9.$$ Employing \eqref{5.32} we have that $$\int \emph{\emph{Im}}^2_1(t) d\mu_{A_{0, 1, 2}} > 0.0539,$$ and therefore
\begin{equation}
\label{5.33}
    \int |\mathcal{Z}_1|^2(t) d\mu_{A_{0, 1, 2}} > 0.498.
\end{equation}

Using Lemma \ref{lemmsquare} we have that 
 $$\int  |\mathcal{Z}_1|^2 d\mu_{A_{0, 1, 2}}= 0.4619+  \frac{  \int \omega(\tfrac{1}{2}+it)\bigg|\displaystyle{\sum_{{T}{\small{\log^{-2}T}} \leq n}} \frac{b(n)}{n^{\tfrac{_1}{2}  + it}}\bigg|^2dt }{A_2 \log^{4} T \int_{0}^{1} v^{3}dv},$$
 where $b(n)= -\frac{2}{ \log T} \sum_{d|n}\Lambda(d)\big(1-\frac{\log(d)}{ \log T}\big) \lambda(n/d) d_2(n/d) \asymp \lambda(n)d_2(n)\frac{ \log n}{ \log T}.$ Now by using \eqref{5.33}, we have that 
 \begin{equation*}
     \int \omega(\tfrac{1}{2}+it)\bigg|\displaystyle{ \sum_{{T}{\log^{-2}T}\leq n} } \hspace{2 mm} \displaystyle{ \frac{b(n)}{n^{\tfrac{_1}{2}  + it}}}\bigg|^2dt > 0.009 A_2 \log^{4} T. 
 \end{equation*}
 
\end{proof}
\begin{proof}[ Corollary \ref{corlambda2}]
The proof goes similarly to the proof of Corollary \ref{cor 1}, therefore we only comment on parts that are different. We have 
\begin{equation}
\label{prooflambda21}
    \int \emph{\emph{Im}}^{2}_\alpha(t) d\mu_{\lambda_2}  = \int  \frac{|\mathcal{Z}_\alpha|^2}{2} -\Big(\frac{\mathcal{Z}^{2}_\alpha}{4} + \frac{\overline{\mathcal{Z}^{2}_\alpha}}{4}\Big) \hspace{1 mm} d\mu_{\lambda_2}.
\end{equation}
Using \eqref{thmlambda2} with $k=1$ we get that $$\int \Big(\frac{\mathcal{Z}^{2}_\alpha}{4} + \frac{\overline{\mathcal{Z}^{2}_\alpha}}{4}\Big) d\mu_{\lambda_2}= -0.18333\cdots .$$
For $\int {|\mathcal{Z}_\alpha|^2}d\mu_{\lambda_2},$ we use Lemma \ref{lemmsquare} with 
\begin{align}
\label{lambda2eq2}
    b(n)=  - \frac{2}{\alpha \log T} \sum_{d|n}\Lambda(d)\big(1-\frac{\log(d)}{\alpha \log T}\big) \lambda_2(n/d) 
\end{align}
As we seen with the proof of Corollary \ref{cor 1}, in estimating  $$\sum_{n< T\log^{-2}T} \frac{|b(m)|^2}{m}$$ main contributions   comes form $d=p$ and therefore $\lambda_2(n/d)$ can be pulled out of the sum in   \eqref{lambda2eq2} with a factor of $$(-1)^{\big[(\Omega(n)-1)/2\big]}.$$ Since we have in the sum of $|b(m)|^2,$ we can ignore the contribution of $-1,$ and the rest of calculation is the same as the calculation for $\lambda$ in \eqref{3.16}.
\end{proof}
\begin{proof}[Proof of Corollary \ref{Cor14}] Equation \eqref{cor14} basically gives the result:
 $$\int |\mathcal{Z}_\alpha|^2 d\mu_\lambda \geq   \frac{4}{3\alpha^2} - \frac{1}{\alpha^3} + \frac{7}{30\alpha^4}.$$ Now let us compare this with what we get from only applying the Mollifier method. Using the theorem we have that \begin{equation*}
   \int  \Re (\mathcal{Z}^2_\alpha(\tfrac{1}{2}+it)) d\mu_\lambda= \int \big(C_\alpha(t)^2-\emph{\emph{Im}}^2_\alpha(t)\big) d\mu_\lambda= \tfrac{4}{6}\alpha^{-2}- \tfrac{1}{3}\alpha^{-3} + \tfrac{1}{30}\alpha^{-4}.
\end{equation*}
On the other hand for $\alpha >1,$ using the mollifier method, we have that 
$$\int C_\alpha(t)^2 d\mu_\lambda > \big(\frac{1}{\alpha} - \frac{1}{3\alpha^2}\big)^2,$$ therefore  
$$\int \emph{\emph{Im}}^2_\alpha(t) d\mu_\lambda > \frac{1}{3\alpha^2} - \frac{1}{3\alpha^3} - \frac{17}{90 \alpha^4},$$
which give $$ \int  \big(C_\alpha(t)^2+ \emph{\emph{Im}}^2(t) \big) d\mu_\lambda \geq \frac{4}{3}\alpha^{-2}- \alpha^{-3} + \tfrac{17}{90}\alpha^{-4}.$$

\end{proof}
\section*{Acknowledgement} This work was supported by Institute for Research in Fundamental Sciences (IPM) in Iran. I would like to thank them for their support and hospitality during my visit.


\begin{thebibliography}{IW}
%%\bibitem{BCH}%R. Balasubramanian, J.B. Conrey,and D.R. Heath-Brown, {\it%Asymptotic mean square of the product of the Riemann zeta-function and a Dirichlet polynomial},%J. Reine Angew. Math. 357 (1985), 161-181.%

\bibitem{Me}F. Aryan {\em On an extension of the Landau-Gonek formula}, preprint, available at arXiv:1902.05473.

\bibitem{Me2}F. Aryan {\em A new approach to gaps between zeta zeros}, preprint, available at arXiv:1910.02408.


\bibitem{Me-Nath}F. Aryan \and N.C. Ng, {\em Discrete Mean Values of Dirichlet Polynomials}, In preparation.

\bibitem{BMN} H.M. Bui, M.B. Milinovich, and N.C. Ng, {\it A note on the gaps between consecutive zeros of the Riemann zeta-function}, Proc. Amer. Math. Soc. 138 (2010), no. 12, 4167-4175.

%%\bibitem{CGG} J. B. Conrey, A. Ghosh, \and S. M. Gonek, \textit{A note on gaps between zeros of the zeta function}, Bull. London Math. Soc. \textbf{16} (1984), 421--424.

%\bibitem{CI} J. B. Conrey \and H. Iwaniec, \textit{Spacing of zeros of Hecke $L$-functions and the class number problem}, Acta Arith. \textbf{103} (2002), no. 3

\bibitem{FGL} D. W. Farmer, S. M. Gonek, \and Y. Lee, \textit{Pair correlation of the zeros of the derivative of the Riemann $\xi$‐function}, Journal of the London Mathematical Society, \textbf{90} (2014), 241--269.
%\bibitem{Gol} D. M. Goldfeld, \textit{ The class number of quadratic fields and the conjectures of Birchand Swinnerton-Dyer}, Ann. Scuola Norm. Sup. Pisa (4) 3 (1976), 624–663.

%Dan Goldston, unpublished manuscript.%\bibitem{Gr-Za} B. GROSS \and D. ZAGIER , \textit{Points de Heegner et d\' eriv\' ees de fonctions L, C. R. Acad. Sci. Paris S\' er}. I Math. 297 (1983), 85-87.\bibitem{GG}D. A. Goldston and S. M. Gonek. \textit{A note on $S(t)$ and the zeros of the Riemann zeta-function}, Bull. Lond. Math. Soc. 39(3), (2007), 482-486.%%\bibitem{GJ}%Roger Godement and Herv\'{e} Jacquet, {\it%Zeta functions of simple algebras},%Lecture Notes in Mathematics, Vol. 260. Springer-Verlag, Berlin-New York, 1972.%%\bibitem{Ma}%James Maynard, {\it Small gaps between primes}, Ann. of Math. (2) 181 (2015), no. 1, 383-413.%%\bibitem{MRT}%Kaisa Matomaki, Maksym Radziwill, Terence Tao, {\em An averaged form of Chowla's conjecture},%http://arxiv.org/abs/1503.05121.%%\bibitem{MTB}%Micah B. Milinovich and Caroline L. Turnage-Butterbaugh, {\it%Moments of products of automorphic L-functions},%J. Number Theory 139 (2014), 175-204.
\bibitem{Harper} A. J. Harper \textit{The Riemann zeta function in short intervals [after Najnudel, and Arguin, Belius, Bourgade, Radziwill, and Soundararajan]}.	arXiv:1904.08204.

\bibitem{MV} H. L. Montgomery and R. C. Vaughan \textit{The large sieve, Mathematika}, 1973 20, 119–134
\bibitem{Mo} H. L. Montgomery, \textit{The pair correlation of the zeros of the zeta function}, Proc. Symp. Pure Math. 24, A.M.S., Providence 1973, 181--193.%\bibitem{MO} H. L. Montgomery \and A. M. Odlyzko, \textit{Gaps between zeros of the zeta function}, Coll. Math. Soc. J\~{a}nos Bolyai 34. Topics in Classical Number Theory, Budapest, 1981.%%\bibitem{MV}%Montgomery, H. L.; Vaughan, R. C. Hilbert's inequality. J. London Math. Soc. (2) 8 (1974), 73�82.%%\bibitem{MW} H. L. Montgomery \and P. J. Weinberger, \textit{Notes on small class numbers}, Acta Arith. \textbf{24} (1974), 529--542.
\bibitem{Gold-Gon} D.A. Goldston and S. M. Gonek, \textit{ Mean value theorems for long Dirichlet polynomials and tails of
Dirichlet series}, Acta Arith. 84 (1998), no. 2, 155–192.
\bibitem{GGM} D. A. Goldston, S. M. Gonek, and H. L. Montgomery, \textit{Mean values of the logarithmic derivative of the Riemann zeta-function with applications to primes in short intervals}, J. Reine
Angew. Math. 537 (2001), 105-126.

\bibitem{Gonek}S. M. Gonek, \textit{An explicit formula of Landau and its applications to the theory of the zeta function}, Contemp. Math 143 (1993), 395–413.%%\bibitem{Bal-Thesis}Baluyot, Siegfred Alan C. On the Zeros of Riemann's Zeta-function. Thesis (Ph.D.)–University of Rochester. 2017. 200 pp. ISBN: 978-1369-82302-8.%%\bibitem{Sie}S. A. Baluyot,\textit{On the pair correlation conjecture and the alternative hypothesis},Journal of Number Theory,Volume 169,2016,Pages 183--226.%%%
\bibitem{Iosif} Iosif Pinelis (https://mathoverflow.net/users/36721/iosif-pinelis), Moments of complex random variables, URL (version: 2020-07-09): https://mathoverflow.net/q/365197.

\bibitem{MotRad}  K. Matom\"{a}ki and M. Radziwiłł. \textit{Multiplicative Functions in Short Intervals.} Ann. of Math
183, 2016, pp. 1015-1056.

\bibitem{Sound}
K. Soundararajan,\textit{ Extreme values of zeta and L-functions}, Math. Ann. 342 (2008), 467–486.
%
\bibitem{Sarnak}  P. Sarnak. \textit{ Three lectures on the Mobius function randomness and dynamics}
https://www.math.ias.edu/files/wam/2011/PSMobius.pdf, 2011.

\bibitem{BSe} Bondarenko, Andriy; Seip, Kristian. Large greatest common divisor sums and extreme values of the Riemann zeta function. Duke Math. J. 166 (2017), no. 9, 1685--1701.

\end{thebibliography}
\end{document}